\numberwithin{equation}{section}
\date{}
\newcommand{\R}{{\mathbb R}}
\newcommand{\Q}{{\mathbb Q}}
\newcommand{\Z}{{\mathbb Z}}
\theoremstyle{plain}
\numberwithin{equation}{section}
\newtheorem{thm}{Theorem}[section]
\newtheorem{theorem}[thm]{Theorem}
\newtheorem{lemma}[thm]{Lemma}
\newtheorem{corollary}[thm]{Corollary}
\newtheorem{remark}[thm]{Remark}
\newtheorem{example}[thm]{Example}
\newtheorem{question}[thm]{Question}
\newtheorem{definition}[thm]{Definition}
\newtheorem{thm_A}{Theorem}
\newtheorem{theorem_A}[thm_A]{Theorem}
\newtheorem{lemma_A}[thm_A]{Lemma}
\newtheorem{corollary_A}[thm_A]{Corollary}
\newtheorem{remark_A}[thm_A]{Remark}
\begin{document}
\setcounter{page}{1}

\title{On a generalized $k$-FL sequence and its applications}
\author{WonTae Hwang$^{1}$, Youngwoo Kwon$^{2}$ and Kyunghwan Song$^{2,*}$\\
	${}^{1}$School of Mathematics, Korea Institute for Advanced Study, \\
	Seoul 02455, Republic of Korea\\
	${}^{2}$Department of Mathematics, Korea University, \\
	Seoul 02841, Republic of Korea\\
	${}^{*}${Corresponding Author Email: heroesof@korea.ac.kr}
}

\maketitle

\begin{abstract}
We introduce a generalized $k$-FL sequence and special kind of pairs of real numbers that are related to it. As an application, we describe a family of integer sequences each of whose element has a finite Zsigmondy set. Also, we associate skew circulant and circulant matrices to each generalized $k$-FL sequence, and study the determinantal variety of those matrices as another application.
\end{abstract}

    \section{Introduction} \label{sec_intro}
Given an integer sequence $\mathcal{A}=\{a_n \}_{n \in \Z_{\geq 0}},$ we may introduce the Zsigmondy set, denoted $\mathcal{Z}(\mathcal{A})$, associated to the sequence. It might be an interesting question to ask whether the Zsigmondy set of a given integer sequence is finite or not. In the literature, there are many examples of an integer sequence $\mathcal{A}$ such that $\mathcal{Z}(\mathcal{A})$ is finite. For a brief survey on some of such examples, see Application 1. Now, we can generalize this concept to a family of integer sequences in the following sense: let $\mathcal{F}$ be a family of integer sequences. We let
\begin{equation*}
N_{\mathcal{F}}=\sup_{\mathcal{A}:=\{a_n\}_{n \in \Z_{\geq 0}} \in \mathcal{F}} |\mathcal{Z}(\mathcal{A})|
\end{equation*}
where $|S|$ denotes the cardinality of a set $S.$ We can ask whether $N_{\mathcal{F}}$ is finite or not. More precisely, we can ask the following question:
\begin{question}\label{que_Zsigmondy_1}
	Is there any family $\mathcal{F}$ of integer sequences such that $N_{\mathcal{F}}$ is infinite but each element of $\mathcal{F}$ has a finite Zsigmondy set?
\end{question}

In this paper, in an attempt to give an answer for this question, we will introduce a new sequence, called a \emph{generalized $k$-FL sequence}, that is defined by an $\R$-linear combination of the $k$-Fibonacci sequence and the $k$-Lucas sequence for a real number $k$. We recall that the $k$-Fibonacci sequence is a generalization of the classical Fibonacci sequence, and is defined recursively by
\begin{equation*}
F_{k,0}=0, F_{k,1}=1,~\textrm{and}~F_{k,n}=k \cdot F_{k,n-1} + F_{k,n-2}~~~\textrm{for}~n \geq 2
\end{equation*}
where $k$ is a positive real number. This $k$-Fibonacci sequence was introduced and used for the well-known four-triangle longest-edge (4TLE) partition\cite{SA}. Similarly, the $k$-Lucas sequence is a generalization of the classical Lucas sequence, and is defined recursively by
\begin{equation*}
L_{k,0}=2, L_{k,1}=k,~\textrm{and}~L_{k,n}=k \cdot L_{k,n-1} + L_{k,n-2}~~~\textrm{for}~n \geq 2
\end{equation*}
where $k$ is a positive real number, that was introduced in \cite{S}. \\

After defining a generalized $k$-FL sequence in terms of the $k$-Fibonacci sequence and the $k$-Lucas sequence, we will associate skew circulant and circulant matrices to each generalized $k$-FL sequence, and then examine the invertibility of such matrices to define special kind of pairs of real numbers. To define such pairs, we also need the well-known fact that the plane curve defined by the equation $y^2=x^3+ax+b$ ($a,b \in \R$) is singular if and only if $4a^3+27b^2=0.$ Finally, as applications, we will give an answer for Question \ref{que_Zsigmondy_1}, and describe the determinantal variety of those associated matrices. \\

This paper is organized as follows: \\

In Section \ref{sec_matrix}, we define a $k$-FL sequence as a linear combination of the $k$-Fibonacci sequence and the $k$-Lucas sequence for a positive real number $k$, which also depends on two positive real numbers $a,b$. Also, we introduce a generalized $k$-FL sequence by weakening the positivity condition on the real numbers $k,a,$ and $b$ above, together with the associated skew circulant and circulant matrices to a generalized $k$-FL sequence. In particular, we give the criteria for those matrices to be invertible. \\

In Section \ref{sec_pair}, we introduce special kind of pairs of real numbers, called \emph{singular $k$-FL pairs} of various type, that are related to the non-invertibility of the associated skew circulant and circulant matrices to a given generalized $k$-FL sequence. Also, we describe a set of real numbers that encode certain property of a singular plane curve. \\

In Section \ref{sec_application},  we obtain some information about a family of integer sequences each of whose element has a finite Zsigmondy set, using generalized $k$-FL sequences, to answer Question \ref{que_Zsigmondy_1}. Also, given a generalized $k$-FL sequence, we use singular $k$-FL pairs of various type to give a description of the determinantal variety of the associated skew circulant and circulant matrices, as applications.  \\

Finally, we provide some identities related to a $k$-FL sequence in the Appendix.

\section{Generalized $k$-FL Sequences and Associated Matrices}\label{sec_matrix}
We start this section by considering a new sequence that is obtained by taking a linear combination of a $k$-Fibonacci sequence and a $k$-Lucas sequence for some positive real number $k$.
\begin{definition}
	Let $k,a$, and $b$ be positive real numbers. We define a \emph{$k$-FL sequence} to be a sequence $\{S_{k,n}^{(a,b)}\}_{n \in \Z_{\geq 0}}$ given by the recurrence relation
	\begin{equation}
	S_{k,0}^{(a,b)}=2b, S_{k,1}^{(a,b)}=bk+a,~\textrm{and}~S_{k,n}^{(a,b)}=k \cdot S_{k,n-1}^{(a,b)}+S_{k,n-2}^{(a,b)}~\textrm{for}~n \geq 2.
	\end{equation}
	In other words, $S^{(a,b)}_{k,n}=a \cdot  F_{k,n}+b \cdot L_{k,n}$ for each integer $n \geq 0$, where $F_{k,n}$ (resp. $L_{k,n}$) denotes the $n$-th $k$-Fibonacci number (resp. the $n$-th $k$-Lucas number).
\end{definition}

We assumed the positivity of the real numbers $k,a$, and $b$ in the above definition only because we wanted the sequence to be increasing as in the case of the classical Fibonacci sequence. By removing the positivity condition on those numbers, we have the following
\begin{definition}[Generalized $k$-FL sequence]
	Let $k,a$, and $b$ be real numbers. We define a \emph{generalized $k$-FL sequence} to be a sequence $\{S_{k,n}^{(a,b)}\}_{n \in \Z_{\geq 0}}$ given by the recurrence relation
	\begin{equation}\label{eq_k-FL}
	S_{k,0}^{(a,b)}=2b, S_{k,1}^{(a,b)}=bk+a,~\textrm{and}~S_{k,n}^{(a,b)}=k \cdot S_{k,n-1}^{(a,b)}+S_{k,n-2}^{(a,b)}~\textrm{for}~n \geq 2.
	\end{equation}
\end{definition}
\begin{example}
	For any real numbers $a$ and $b$, the sequence $\{S_{0,n}^{(a,b)}\}_{n \in \Z_{\geq 0}}$ has the property that
	\begin{equation*}
	S_{0,2i}^{(a,b)}=2b~~\textrm{and}~~S_{0,2i+1}^{(a,b)}=a~~\textrm{for all}~i \geq 0.
	\end{equation*}
\end{example}

Now, given a generalized $k$-FL sequence $\{S_{k,n}^{(a,b)}\}_{n \in \Z_{\geq 0}}$, we may introduce two associated matrices:
\begin{equation*}
A^{(k,a,b)}_n :=\left(\begin{array}{cccc} S^{(a,b)}_{k,1} & S^{(a,b)}_{k,2} & \cdots & S^{(a,b)}_{k,n} \\ -S^{(a,b)}_{k,n} & S^{(a,b)}_{k,1} &\cdots & S^{(a,b)}_{k,n-1} \\ \vdots & \vdots & \vdots & \vdots \\ -S^{(a,b)}_{k,2} & -S^{(a,b)}_{k,3} & \cdots & S^{(a,b)}_{k,1}
\end{array}\right)
\end{equation*}
and
\begin{equation*}
B^{(k,a,b)}_n :=\left(\begin{array}{cccc} S^{(a,b)}_{k,1} & S^{(a,b)}_{k,2} & \cdots & S^{(a,b)}_{k,n} \\ S^{(a,b)}_{k,n} & S^{(a,b)}_{k,1} &\cdots & S^{(a,b)}_{k,n-1} \\ \vdots & \vdots & \vdots & \vdots \\ S^{(a,b)}_{k,2} & S^{(a,b)}_{k,3} & \cdots & S^{(a,b)}_{k,1}
\end{array}\right)
\end{equation*}
for each integer $n \geq 1$. (Note that $A_n^{(k,a,b)}$ is a skew circulant matrix and $B_{n}^{(k,a,b)}$ is a circulant matrix.)
\begin{example}
	If $a=b=0$, then both $A^{(k,a,b)}_{n}$ and $B^{(k,a,b)}_{n}$ are the zero matrix for any real number $k$ and any integer $n \geq 1$.
\end{example}
Regarding the invertibility of these matrices, we have the following crucial
\begin{theorem}\label{thm_matrix_invertible_1}
	Let $k \ne 0$ be a real number.
	\begin{enumerate}
		\item[(a)] For an odd integer $n \geq 1,$ the matrix $A^{(k,a,b)}_{n}$ is invertible if and only if $S_{k,n+1}^{(a,b)} - S_{k,n}^{(a,b)} \ne -bk+2b-a$.
		\item[(b)] For an even integer $n \geq 2,$ the matrix $B^{(k,a,b)}_{n}$ is invertible if and only if $S_{k,n+1}^{(a,b)} - S_{k,n}^{(a,b)} \ne bk-2b+a$ and $S_{k,n+1}^{(a,b)}+S_{k,n}^{(a,b)} \ne bk+2b+a$.
	\end{enumerate}
\end{theorem}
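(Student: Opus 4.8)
The plan is to exploit the well-known theory of skew-circulant and circulant matrices, since $A^{(k,a,b)}_n$ is precisely the skew-circulant matrix with first row $\bigl(S^{(a,b)}_{k,1},\dots,S^{(a,b)}_{k,n}\bigr)$ and $B^{(k,a,b)}_n$ is the circulant matrix with the same first row. Recall that the eigenvalues of such matrices are given by evaluating the ``representer'' polynomial $f(x)=\sum_{j=1}^{n} S^{(a,b)}_{k,j}\, x^{j-1}$ at the $n$-th roots of $-1$ (for skew-circulant) and at the $n$-th roots of $1$ (for circulant). Consequently the determinant is $\prod f(\zeta)$ over the relevant roots of unity, and non-invertibility is equivalent to $f(\zeta)=0$ for some such $\zeta$. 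So the first step is to set up this dictionary and reduce both (a) and (b) to showing that the only way a root of $x^n+1$ (resp.\ $x^n-1$) can be a root of $f$ is the ``obvious'' one, and to pin down exactly when that obvious coincidence happens.

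The second and main step is to get a closed form for $f(x)=\sum_{j=1}^{n} S^{(a,b)}_{k,j} x^{j-1}$ modulo $x^n+1$ and modulo $x^n-1$. Using the recurrence $S^{(a,b)}_{k,j}=k S^{(a,b)}_{k,j-1}+S^{(a,b)}_{k,j-2}$, one computes $(1-kx-x^2) f(x)$ and finds that all interior terms telescope, leaving only boundary contributions involving $S^{(a,b)}_{k,0}$, $S^{(a,b)}_{k,1}$, $S^{(a,b)}_{k,n}$, and $S^{(a,b)}_{k,n+1}$ together with a factor $x^n$. Then I would substitute $x^n=-1$ (resp.\ $x^n=1$) to kill that factor; this turns $(1-kx-x^2)f(x)$ into a \emph{linear} polynomial in $x$ whose coefficients are explicit affine expressions in $S^{(a,b)}_{k,n}$, $S^{(a,b)}_{k,n+1}$ and the initial data $2b$, $bk+a$. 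Writing $1-kx-x^2 = -(x-\alpha)(x-\beta)$ with $\alpha\beta=-1$, $\alpha+\beta=-k$ (the characteristic roots), one notes $\alpha,\beta$ are never $n$-th roots of $\pm 1$ for $k\neq 0$ since $|\alpha\beta|=1$ forces $|\alpha|\ne 1\ne|\beta|$ when $k\neq 0$; hence $1-k\zeta-\zeta^2\ne 0$ at every root $\zeta$ of interest, and so $f(\zeta)=0 \iff (\text{that linear polynomial in }x)$ vanishes at $x=\zeta$.

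The third step handles the resulting linear condition. For part (a), $n$ odd, the only $n$-th root of $-1$ that can be a root of a linear polynomial with real coefficients is $\zeta=-1$ (this uses that the nonreal roots come in conjugate pairs so a real linear form with a nonreal root is identically zero, and one checks the identically-zero case is excluded for $k\ne0$). Evaluating the linear polynomial at $x=-1$ and simplifying via $x^n=-1$, the condition $f(-1)=0$ becomes exactly $S^{(a,b)}_{k,n+1}-S^{(a,b)}_{k,n}=-bk+2b-a$; negating gives the stated invertibility criterion. For part (b), $n$ even, the real $n$-th roots of $1$ are $\zeta=1$ and $\zeta=-1$, giving two possible obstructions; evaluating the (post-substitution) linear polynomial at $x=1$ and at $x=-1$ and simplifying yields respectively $S^{(a,b)}_{k,n+1}+S^{(a,b)}_{k,n}=bk+2b+a$ and $S^{(a,b)}_{k,n+1}-S^{(a,b)}_{k,n}=bk-2b+a$, and $B^{(k,a,b)}_n$ is invertible iff neither holds.

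I expect the main obstacle to be the bookkeeping in the second step: carefully collecting the boundary terms of $(1-kx-x^2)f(x)$, correctly using $S^{(a,b)}_{k,0}=2b$ and $S^{(a,b)}_{k,1}=bk+a$ (and possibly the ``index $-1$'' value $S^{(a,b)}_{k,-1}$ forced by the recurrence) to get clean closed forms, and then doing the substitution $x^n=\pm1$ without sign errors so that the final linear conditions come out exactly as in the statement. A secondary subtlety is justifying rigorously that $f$ cannot vanish at a nonreal root of unity here — i.e. that the linear form obtained is not identically zero when $k\ne 0$ — which amounts to checking its leading coefficient (an expression in $S^{(a,b)}_{k,n+1},S^{(a,b)}_{k,n},k,a,b$) is nonzero, or handling the degenerate subcase directly; once that is in place the odd/even root-of-unity case analysis is routine.
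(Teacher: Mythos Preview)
Your proposal is correct and shares the paper's overall architecture: reduce invertibility to the eigenvalue criterion for (skew-)circulant matrices, obtain a closed form for $f(\zeta)$ at the relevant roots of unity, observe that (after clearing the nonvanishing factor $1-k\zeta-\zeta^{2}$) this closed form is \emph{linear} in $\zeta$ with real coefficients, and conclude that it can vanish only at the real roots $\zeta=\pm 1$, which yields exactly the stated equalities.

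The one substantive difference is how the closed form is obtained. The paper inserts Binet's formula $S^{(a,b)}_{k,j}=(X\alpha^{j}-Y\beta^{j})/(\alpha-\beta)$ and sums two geometric series to reach
\[
f(\zeta)=\frac{(bk+a\pm S^{(a,b)}_{k,n+1})+(2b\pm S^{(a,b)}_{k,n})\,\zeta}{1-k\zeta-\zeta^{2}},
\]
whereas you telescope $(1-kx-x^{2})f(x)$ directly from the recurrence and then substitute $x^{n}=\mp 1$. Your route is a bit more elementary (it never needs the explicit Binet expression), while the paper's route makes the role of $\alpha,\beta$ more transparent; both land on the same linear numerator and proceed identically from there. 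One small correction to your wording: the ``identically zero'' degenerate case of the linear form is not literally excluded for $k\ne 0$ (e.g.\ $a=b=0$), but it is harmless---if both coefficients vanish then in particular $S^{(a,b)}_{k,n+1}-S^{(a,b)}_{k,n}=-bk+2b-a$ (resp.\ the part~(b) equalities) already holds, which is exactly how the paper disposes of it as well.
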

To prove the theorem, we need
\begin{lemma}\label{lem_matrix_invertible_skewcirculant}
	Let $n \geq 1$ be an integer.
	\begin{enumerate}
		\item[(a)] Let $A=\left(\begin{array}{cccc} a_1 & a_2 & \cdots & a_n \\ -a_n & a_1 &\cdots & a_{n-1} \\ \vdots & \vdots & \vdots & \vdots \\ -a_{2} &  -a_{3} & \cdots & a_{1}
		\end{array}\right)\in M_{n\times n}(\R)$ be a skew circulant matrix. Then $A$ is invertible if and only if $f(\omega^{m} \eta) \ne 0$ for all $m=0,1,\cdots,n-1$, where $\displaystyle f(x)=\sum_{j=1}^{n} a_{j} x^{j-1} \in \R[x]$, $\displaystyle \omega = \textrm{exp} \left(\frac{2\pi \sqrt{-1}}{n}\right)$, and $\eta=\textrm{exp} \left(\frac{\pi \sqrt{-1}}{n}\right)$.
		\item[(b)] Let $B=\left(\begin{array}{cccc} a_{1} & a_{2} & \cdots & a_{n} \\ a_{n} & a_{1} &\cdots & a_{n-1} \\ \vdots & \vdots & \vdots & \vdots \\ a_{2} &  a_{3} & \cdots & a_{1}
		\end{array}\right)\in M_{n\times n}(\R)$ be a circulant matrix. Then $B$ is invertible if and only if $f(\omega^{m}) \ne 0$ for all $m=0,1,\cdots,n-1$, where $\displaystyle f(x)=\sum_{j=1}^{n} a_{j} x^{j-1} \in \R[x]$ and $\displaystyle \omega = \textrm{exp} \left(\frac{2\pi \sqrt{-1}}{n}\right)$.
	\end{enumerate}
\end{lemma}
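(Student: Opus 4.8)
The plan is to write each of these matrices as a polynomial $f(T)$ in a single ``shift''-type matrix $T$ whose eigenvalues over $\C$ are known explicitly, and then use that a polynomial evaluated at a matrix diagonalizable over $\C$ has determinant equal to the product of $f$ over the multiset of eigenvalues. Since a real square matrix is invertible precisely when its determinant, computed over $\C$, is nonzero, invertibility will translate into the non-vanishing of $f$ at the prescribed finite list of points.

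For part (b), I would take $T=\pi$, the basic circulant permutation matrix (ones on the superdiagonal together with a single one in the bottom-left corner). Reading off the entries row by row, one verifies $B=\sum_{j=1}^{n}a_j\pi^{j-1}=f(\pi)$. Because $\pi^n=I$ and $x^n-1$ has $n$ distinct complex roots, $\pi$ is diagonalizable over $\C$ with eigenvalues exactly $\omega^0,\omega^1,\dots,\omega^{n-1}$; indeed $(1,\omega^m,\omega^{2m},\dots,\omega^{(n-1)m})^{T}$ is an eigenvector of $\pi$ for the eigenvalue $\omega^m$. Hence $f(\pi)$ has eigenvalues $f(\omega^0),\dots,f(\omega^{n-1})$, so $\det B=\prod_{m=0}^{n-1}f(\omega^m)$, and $B$ is invertible if and only if $f(\omega^m)\ne 0$ for every $m$.

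For part (a), I would run the identical argument with $T=\tilde\pi$, the ``skew shift'' obtained from $\pi$ by replacing the bottom-left entry by $-1$. Matching entries gives $A=\sum_{j=1}^{n}a_j\tilde\pi^{j-1}=f(\tilde\pi)$, and this time $\tilde\pi^n=-I$. Since $x^n+1$ has $n$ distinct complex roots, namely the $n$-th roots of $-1$, i.e. $\eta\omega^m$ for $m=0,\dots,n-1$, and since $v_m:=(1,\eta\omega^m,(\eta\omega^m)^2,\dots,(\eta\omega^m)^{n-1})^{T}$ satisfies $\tilde\pi v_m=\eta\omega^m v_m$, the matrix $\tilde\pi$ is diagonalizable over $\C$ with exactly these $n$ eigenvalues. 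Therefore $\det A=\prod_{m=0}^{n-1}f(\eta\omega^m)$, and $A$ is invertible if and only if $f(\eta\omega^m)\ne 0$ for all $m$. (Alternatively, one can deduce (a) from (b): with $D=\mathrm{diag}(1,\eta,\dots,\eta^{n-1})$ the conjugate $D^{-1}AD$ is exactly the circulant matrix with first row $(a_1,\eta a_2,\dots,\eta^{n-1}a_n)$, whose symbol is $f(\eta x)$, and $D^{-1}AD$ is invertible iff $A$ is.)

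I do not anticipate a genuine obstacle; the one step that must be done carefully rather than merely cited is the identity $A=f(\tilde\pi)$ (and its cousin $B=f(\pi)$): one has to check that the entries of $A$ below the diagonal pick up precisely the sign contributed by the corner entry $-1$ of $\tilde\pi$, which amounts to the relation $\tilde\pi^n=-I$. Everything else is the standard spectral bookkeeping for circulant-type matrices recalled above.
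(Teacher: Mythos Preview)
Your argument is correct: writing $B=f(\pi)$ and $A=f(\tilde\pi)$ for the ordinary and skew shift matrices, diagonalizing over $\C$ via $\pi^n=I$ and $\tilde\pi^n=-I$, and reading off $\det=\prod_m f(\text{eigenvalue}_m)$ is exactly the standard route to these facts, and the details you indicate (the entrywise verification of $A=f(\tilde\pi)$, the explicit eigenvectors, and the alternative reduction of (a) to (b) via conjugation by $D=\mathrm{diag}(1,\eta,\dots,\eta^{n-1})$) all go through without issue.

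As for comparison with the paper: the paper does not prove this lemma at all. It treats the result as known and simply cites Lemma~3 of \cite{GJG} for part~(a) and Lemma~1.1 of \cite{SCH} for part~(b). So your proposal is not taking a different route from the paper's proof; rather, it supplies a self-contained argument where the paper defers entirely to the literature. If anything, your write-up is more complete on this point than the paper itself.
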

\begin{proof}
	\begin{enumerate}
		\item[(a)] For a proof, see \cite[Lemma 3]{GJG}.
		\item[(b)] For a proof, see \cite[Lemma 1.1]{SCH}.
	\end{enumerate}
\end{proof}
Now, we are ready to prove the theorem.
\begin{proof}[Proof of Theorem \ref{thm_matrix_invertible_1}]
	As in the proof of Theorem \ref{thm_catalan} (see Appendix below), we can write
	\begin{equation*}
	S_{k,n}^{(a,b)}=\frac{X \alpha^{n} - Y \beta^{n}}{\alpha - \beta}
	\end{equation*}
	where $X=a+\sqrt{k^{2}+4}b$ and $Y=a-\sqrt{k^{2}+4}b$. Also, let $f(x) \in \R[x]$ be a polynomial defined as in Lemma \ref{lem_matrix_invertible_skewcirculant}.
	\begin{enumerate}
		\item[(a)] We have
		\begin{align}
		f(\omega^{m} \eta) & =\sum_{j=1}^{n} S_{k,j}^{(a,b)}(\omega^{m} \eta)^{j-1} \nonumber\\
		& =\sum_{j=1}^{n} \frac{X \alpha^{j}- Y \beta^{j}}{\alpha- \beta}(\omega^{m} \eta)^{j-1} \nonumber\\
		& =\frac{X \alpha (1+\alpha^{n} )(1-\beta \omega^{m} \eta)-Y \beta (1+\beta^{n})(1-\alpha \omega^{m} \eta)}{(\alpha-\beta) (1-\alpha \omega^{m} \eta) (1-\beta \omega^{m} \eta)} \nonumber\\
		& =\frac{(X \alpha - Y \beta)+X(\alpha^{n+1}-Y \beta^{n+1})+\omega^{m} \eta (X-Y) + \omega^{m} \eta (X \alpha^{n} - Y \beta^{n} )}{(\alpha-\beta) (1-\alpha \omega^{m} ) (1-\beta \omega^{m})} \nonumber\\
		& =\frac{bk+a+S_{k,n+1}^{(a,b)}+\omega^{m} \eta (2b+S_{k,n}^{(a,b)})}{1- k \omega^{m} \eta - \omega^{2m}\eta^{2} }\label{eq_invertible_skewcirculant}
		\end{align}
		for all $m=0,1,\cdots,n-1$. \\
		Now, suppose on the contrary that $S_{k,n+1}^{(a,b)} - S_{k,n}^{(a,b)} = -bk+2b-a$. Then by taking $m=\frac{n-1}{2}$, we get from (\ref{eq_invertible_skewcirculant}) that
		\begin{equation*}
		f(\omega^{m} \eta)=f\left(\omega^{\frac{n-1}{2}} \eta \right)=f(-1)=\frac{bk+a+S_{k,n+1}^{(a,b)}-2b -S_{k,n}^{(a,b)}}{k} = 0
		\end{equation*}
		and by Lemma \ref{lem_matrix_invertible_skewcirculant}-(a), this contradicts the assumption that $A_{n}^{(k,a,b)}$ is invertible. \\
		Conversely, suppose that there exists an $l \in \{0,1,\cdots,n-1\}$ such that $f(\omega^{l} \eta)=0$. From (\ref{eq_invertible_skewcirculant}), it follows that
		\begin{equation*}
		bk+a + S_{k,n+1}^{(a,b)} + \omega^{l} \eta (2b + S_{k,n}^{(a,b)})=0.
		\end{equation*}
		Note that, by our assumption, we have
		\begin{equation*}
		S_{k,n}^{(a,b)}+2b \ne 0
		\end{equation*}
		and hence
		\begin{equation}\label{eq_omega_l_eta}
		\omega^l \eta =-\frac{bk+a+S_{k,n+1}^{(a,b)}}{S_{k,n}^{(a,b)}+2b}.
		\end{equation}
		Then since $\omega^{l} \eta = \textrm{exp} \left(\frac{(2l+1)\pi \sqrt{-1}}{n}\right)$ and the (RHS) of (\ref{eq_omega_l_eta}) is a real number, the only possibility is that $\omega^{l} \eta = -1.$ Then it follows from (\ref{eq_omega_l_eta}) that
		\begin{equation*}
		S_{k,n+1}^{(a,b)}-S_{k,n}^{(a,b)}=-bk +2b - a,
		\end{equation*}
		which contradicts our assumption.
		\item[(b)] Similarly, we have
		\begin{align}
		f(\omega^{m}) & =\sum_{j=1}^{n} S_{k,j}^{(a,b)}(\omega^{m} )^{j-1} \nonumber\\
		& =\sum_{j=1}^{n} \frac{X \alpha^{j}- Y \beta^{j}}{\alpha- \beta}(\omega^{m})^{j-1} \nonumber \\
		& =\frac{X \alpha (1-\alpha^{n} )(1-\beta \omega^{m} )-Y \beta (1-\beta^{n})(1-\alpha \omega^{m} )}{(\alpha-\beta) (1-\alpha \omega^{m} ) (1-\beta \omega^{m})} \nonumber \\
		& =\frac{(X \alpha - Y \beta)-X(\alpha^{n+1}-Y \beta^{n+1})+\omega^{m} (X-Y) - \omega^{m} (X \alpha^{n} - Y \beta^{n} )}{(\alpha-\beta) (1-\alpha \omega^{m} ) (1-\beta \omega^{m})} \nonumber \\
		& =\frac{bk+a-S_{k,n+1}^{(a,b)}+\omega^m (2b-S_{k,n}^{(a,b)})}{1- k \omega^{m} - \omega^{2m}}\label{eq_invertible_circulant}
		\end{align}
		for all $m=0,1,\cdots,n-1$. \\
		Now, suppose on the contrary that either $S_{k,n+1}^{(a,b)} - S_{k,n}^{(a,b)} = bk-2b+a$ or $S_{k,n+1}^{(a,b)}+S_{k,n}^{(a,b)} = bk+2b+a$.
		\begin{enumerate}
			\item[(i)] If $S_{k,n+1}^{(a,b)} - S_{k,n}^{(a,b)} = bk-2b+a$, then by taking $m=\frac{n}{2}$, we get from (\ref{eq_invertible_circulant}) that
			\begin{equation*}
			f(\omega^{m})=f\left(\omega^{\frac{n}{2}}\right)=f(-1)=\frac{bk+a-S_{k,n+1}^{(a,b)}-2b +S_{k,n}^{(a,b)}}{k} = 0
			\end{equation*}
			and by Lemma \ref{lem_matrix_invertible_skewcirculant}-(b), this contradicts the assumption that $B_{n}^{(k,a,b)}$ is invertible. \\
			\item[(ii)] If $S_{k,n+1}^{(a,b)} + S_{k,n}^{(a,b)} = bk+2b+a$, then by taking $m=0$, we get from (\ref{eq_invertible_circulant}) that
			\begin{equation*}
			f(\omega^{m})=f(1)=\frac{bk+a-S_{k,n+1}^{(a,b)}+2b -S_{k,n}^{(a,b)}}{-k} = 0
			\end{equation*}
			and again, by Lemma \ref{lem_matrix_invertible_skewcirculant}-(b), this contradicts the assumption that $B_{n}^{(k,a,b)}$ is invertible.
		\end{enumerate}
		Conversely, suppose that there exists an $l \in \{0,1,\cdots,n-1\}$ such that $f(\omega^{l})=0$. From (\ref{eq_invertible_circulant}), it follows that
		\begin{equation*}
		bk+a - S_{k,n+1}^{(a,b)} + \omega^l (2b - S_{k,n}^{(a,b)})=0.
		\end{equation*}
		Note that, by our assumption, we have
		\begin{equation*}
		S_{k,n}^{(a,b)}-2b \ne 0
		\end{equation*}
		and hence
		\begin{equation}\label{eq_omega_l}
		\omega^{l} =\frac{bk+a-S_{k,n+1}^{(a,b)}}{S_{k,n}^{(a,b)}-2b}.
		\end{equation}
		Then since $\omega^{l} = \textrm{exp} \left(\frac{2 \pi l \sqrt{-1}}{n}\right)$ and the (RHS) of (\ref{eq_omega_l}) is a real number, we can see that we have either $\omega^{l} = 1$ or $\omega^{l} = -1$.
		\begin{enumerate}
			\item[(i)] If $\omega^{l} = 1$, then, by Theorem \ref{thm_identity_sum_all} (see Appendix below), we have
			\begin{equation*}
			f(1)=\sum_{j=1}^{n} S_{k,j}^{(a,b)}=\frac{1}{k}\left(S_{k,n+1}^{(a,b)}+S_{k,n}^{(a,b)}\right)-b-\frac{a+2b}{k}=0
			\end{equation*}
			and hence we get
			\begin{equation*}
			S_{k,n+1}^{(a,b)}+S_{k,n}^{(a,b)} = bk+2b+a,
			\end{equation*}
			which contradicts our assumption. \\
			\item[(ii)] If $\omega^{l} = -1,$ then it follows from (\ref{eq_omega_l}) that
			\begin{equation*}
			S_{k,n+1}^{(a,b)}-S_{k,n}^{(a,b)}=bk -2b + a,
			\end{equation*}
			which also contradicts our assumption.
		\end{enumerate}
	\end{enumerate}
This completes the proof of the theorem.
\end{proof}
By a similar argument, we can prove the following slightly weaker result:
\begin{theorem}\label{thm_matrix_invertible_2}
	Let $k \ne 0$ be a real number.
	\begin{enumerate}
		\item[(a)] For an even integer $n \geq 2,$ if $S_{k,n+1}^{(a,b)} - S_{k,n}^{(a,b)} \ne -bk+2b-a$, then the matrix $A^{(k,a,b)}_{n} $ is invertible.
		\item[(b)] For an odd integer $n \geq 1,$ the matrix $B^{(k,a,b)}_{n}$ is invertible if and only if $S_{k,n+1}^{(a,b)}+S_{k,n}^{(a,b)} \ne bk+2b+a$.
	\end{enumerate}
\end{theorem}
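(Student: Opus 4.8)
The plan is to run the argument of Theorem~\ref{thm_matrix_invertible_1} almost verbatim, the only new ingredient being a parity count of which roots of unity are real. First I would observe that the closed-form computations \eqref{eq_invertible_skewcirculant} and \eqref{eq_invertible_circulant} obtained in that proof were carried out for an \emph{arbitrary} integer $n\ge 1$ --- the parity of $n$ is never used there --- so both formulas are available as they stand. I would also record that, since $k\ne 0$, the roots $\alpha,\beta$ of $t^{2}-kt-1$ are real, distinct, and satisfy $\alpha\beta=-1$, hence neither equals $\pm 1$ (otherwise $k=\alpha+\beta=0$); in particular $|\alpha|\ne 1$ and $|\beta|\ne 1$, so the denominators $1-k\omega^{m}\eta-\omega^{2m}\eta^{2}=(\alpha-\beta)(1-\alpha\omega^{m}\eta)(1-\beta\omega^{m}\eta)$ and $1-k\omega^{m}-\omega^{2m}$ appearing in \eqref{eq_invertible_skewcirculant} and \eqref{eq_invertible_circulant} never vanish (as $|\omega^{m}\eta|=|\omega^{m}|=1$). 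By Lemma~\ref{lem_matrix_invertible_skewcirculant}, then, $A^{(k,a,b)}_{n}$ (resp.\ $B^{(k,a,b)}_{n}$) fails to be invertible exactly when the numerator in \eqref{eq_invertible_skewcirculant} (resp.\ \eqref{eq_invertible_circulant}) vanishes for some index $m\in\{0,\dots,n-1\}$.

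For part (a), with $n$ even, vanishing of the numerator in \eqref{eq_invertible_skewcirculant} at $m=l$ reads $bk+a+S_{k,n+1}^{(a,b)}+\omega^{l}\eta\,(2b+S_{k,n}^{(a,b)})=0$, where $\omega^{l}\eta=\exp\!\big((2l+1)\pi\sqrt{-1}/n\big)$. The key point is that $2l+1$ is odd while $n$ is even, so $n\nmid 2l+1$ and $\omega^{l}\eta\notin\R$; comparing imaginary parts then forces $2b+S_{k,n}^{(a,b)}=0$, whence also $bk+a+S_{k,n+1}^{(a,b)}=0$, so $S_{k,n+1}^{(a,b)}-S_{k,n}^{(a,b)}=-bk+2b-a$. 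Contrapositively, $S_{k,n+1}^{(a,b)}-S_{k,n}^{(a,b)}\ne -bk+2b-a$ forces invertibility of $A^{(k,a,b)}_{n}$. For part (b), with $n$ odd, vanishing of the numerator in \eqref{eq_invertible_circulant} at $m=l$ reads $bk+a-S_{k,n+1}^{(a,b)}+\omega^{l}(2b-S_{k,n}^{(a,b)})=0$ with $\omega^{l}=\exp(2\pi l\sqrt{-1}/n)$; since $n$ is odd, the only real value among $\omega^{0},\dots,\omega^{n-1}$ is $\omega^{0}=1$ (the value $-1$ would require $n=2l$). If $S_{k,n}^{(a,b)}\ne 2b$, the equation forces $\omega^{l}\in\R$, hence $l=0$, giving $S_{k,n+1}^{(a,b)}+S_{k,n}^{(a,b)}=bk+2b+a$; if $S_{k,n}^{(a,b)}=2b$, the equation gives $S_{k,n+1}^{(a,b)}=bk+a$, again $S_{k,n+1}^{(a,b)}+S_{k,n}^{(a,b)}=bk+2b+a$. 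Conversely, when $S_{k,n+1}^{(a,b)}+S_{k,n}^{(a,b)}=bk+2b+a$, evaluating \eqref{eq_invertible_circulant} at $m=0$ gives $f(1)=0$, so $B^{(k,a,b)}_{n}$ is non-invertible. This yields the claimed equivalence in (b).

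There is no serious obstacle here; the one point requiring care is the asymmetry between (a) and (b) and the degenerate cases. For odd $n$ the real root of unity $\omega^{0}=1$ is admissible, so the numerator genuinely can vanish, and its vanishing locus is exactly the hyperplane $S_{k,n+1}^{(a,b)}+S_{k,n}^{(a,b)}=bk+2b+a$, producing an ``if and only if''. For even $n$, however, $\omega^{l}\eta$ is never real, so the numerator can vanish only in the codimension-two situation $S_{k,n}^{(a,b)}=-2b$ \emph{and} $S_{k,n+1}^{(a,b)}=-(bk+a)$; the hypothesis $S_{k,n+1}^{(a,b)}-S_{k,n}^{(a,b)}\ne -bk+2b-a$ only rules out this conjunction, and does not decide invertibility of $A^{(k,a,b)}_{n}$ when that single linear relation holds while $S_{k,n}^{(a,b)}\ne -2b$ --- which is precisely why (a) is stated as a one-sided implication rather than an equivalence. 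One must also be mindful, when dividing through by $2b+S_{k,n}^{(a,b)}$ (resp.\ $2b-S_{k,n}^{(a,b)}$) as in \eqref{eq_omega_l_eta} and \eqref{eq_omega_l}, to treat the vanishing of that quantity as a separate case, exactly as done above.
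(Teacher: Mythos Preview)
Your argument is correct and is exactly the route the paper has in mind: reuse the numerator formulas \eqref{eq_invertible_skewcirculant} and \eqref{eq_invertible_circulant} from the proof of Theorem~\ref{thm_matrix_invertible_1} and then track, for the given parity of $n$, which of the roots of unity $\omega^{l}\eta$ (respectively $\omega^{l}$) can be real. One small slip worth fixing: the denominator factors as $1-k\omega^{m}\eta-\omega^{2m}\eta^{2}=(1-\alpha\omega^{m}\eta)(1-\beta\omega^{m}\eta)$, without the extra factor $\alpha-\beta$, though this does not affect your non-vanishing conclusion.
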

\section{Singular $k$-FL Pairs of Various Type}\label{sec_pair}
Let $k,a$, and $b$ be real numbers, and let $\{S_{k,n}^{(a,b)}\}_{n \in \Z_{\geq 0}}$ be a generalized $k$-FL sequence. In this section, we will introduce a special kind of pairs of real numbers associated to the sequence $\{S_{k,n}^{(a,b)}\}_{n \in \Z_{\geq 0}}$. To this aim, we start with the following two lemmas:
\begin{lemma}\label{lem_fn_gn}
	For each integer $n \geq 1$, there exist (unique) $f_{n}, g_{n} \in \Z[T]$ such that $S^{(a,b)}_{k,n}=f_{n}(k)a+g_{n}(k)b$.
\end{lemma}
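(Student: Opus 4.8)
The plan is to prove existence by strong induction on $n$ using the defining recurrence (\ref{eq_k-FL}), and then to deduce uniqueness of $f_n,g_n$ (as elements of $\Z[T]$) from a standard polynomial-vanishing argument. For the induction it is convenient to also record the case $n=0$: since $S^{(a,b)}_{k,0}=2b$ and $S^{(a,b)}_{k,1}=bk+a$, one sets $f_0(T)=0$, $g_0(T)=2$, $f_1(T)=1$, $g_1(T)=T$, all lying in $\Z[T]$ and satisfying $S^{(a,b)}_{k,m}=f_m(k)a+g_m(k)b$ for $m\in\{0,1\}$.

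For the inductive step, fix $n\geq 2$ and assume polynomials $f_{n-1},g_{n-1},f_{n-2},g_{n-2}\in\Z[T]$ have already been produced with $S^{(a,b)}_{k,m}=f_m(k)a+g_m(k)b$ for $m=n-1$ and $m=n-2$. Substituting these into $S^{(a,b)}_{k,n}=k\,S^{(a,b)}_{k,n-1}+S^{(a,b)}_{k,n-2}$ and collecting the coefficients of $a$ and of $b$ yields
\[
S^{(a,b)}_{k,n}=\bigl(k f_{n-1}(k)+f_{n-2}(k)\bigr)a+\bigl(k g_{n-1}(k)+g_{n-2}(k)\bigr)b,
\]
so it suffices to put $f_n(T)=T f_{n-1}(T)+f_{n-2}(T)$ and $g_n(T)=T g_{n-1}(T)+g_{n-2}(T)$, which are visibly in $\Z[T]$; this closes the induction. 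One sees along the way that $f_n$ and $g_n$ are exactly the $k$-Fibonacci and $k$-Lucas polynomials, i.e.\ $f_n(k)=F_{k,n}$ and $g_n(k)=L_{k,n}$, consistent with the identity $S^{(a,b)}_{k,n}=a F_{k,n}+b L_{k,n}$ recorded in the definition.

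For uniqueness, suppose $f_n,g_n$ and $\hat f_n,\hat g_n$ in $\Z[T]$ both satisfy the claimed identity for \emph{all} real $k,a,b$. Then $\bigl(f_n(k)-\hat f_n(k)\bigr)a+\bigl(g_n(k)-\hat g_n(k)\bigr)b=0$ for all $k,a,b\in\R$; specializing $b=0$ and varying $a$ gives $f_n(k)=\hat f_n(k)$ for every real $k$, and specializing $a=0$ gives $g_n(k)=\hat g_n(k)$ for every real $k$, so $f_n-\hat f_n$ and $g_n-\hat g_n$ are real polynomials with infinitely many roots, hence zero. The argument is entirely routine, so I do not expect a genuine obstacle; the only points needing a little care are the bookkeeping of the initial data (in particular including the case $n=0$, which feeds the recursion at $n=2$) and phrasing the uniqueness as an identity of polynomials, legitimate precisely because $k$ itself is allowed to range over all of $\R$.
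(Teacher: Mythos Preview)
Your proof is correct and follows essentially the same induction as the paper: base cases plus the recursive definition $f_n(T)=Tf_{n-1}(T)+f_{n-2}(T)$, $g_n(T)=Tg_{n-1}(T)+g_{n-2}(T)$. You are in fact slightly more careful than the paper, which only states the base case $n=1$ (leaving the $n=0$ data needed at $n=2$ implicit) and does not spell out the uniqueness argument at all.
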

\begin{proof}
	We will prove the lemma by induction on $n$.
	\begin{enumerate}
		\item[(i)] $n=1$; by definition, we can take $f_{1}(T)=1, g_{1}(T)=T$.
		\item[(ii)] $n > 1$; suppose that there exist $f_{m} ,g_{m} \in \Z[T]$ such that $S^{(a,b)}_{k,m}=f_{m}(k)a+g_{m}(k)b$ for each $m \leq n-1$. By the recursion formula, we have
		\begin{align*}
		S^{(a,b)}_{k,n}&=k \cdot S^{(a,b)}_{k,n-1}+S^{(a,b)}_{k,n-2}\\
		&=k(f_{n-1}(k)a+g_{n-1}(k)b)+(f_{n-2}(k)a+g_{n-2}(k)b)\\
		&=(k f_{n-1}(k)+f_{n-2}(k))a + (kg_{n-1}(k)+g_{n-2}(k))b
		\end{align*}
		so that we can take
		\begin{equation*}
		f_{n}(T)=T \cdot f_{n-1}(T)+f_{n-2}(T)~~\textrm{and}~~ g_{n} (T)=T \cdot g_{n-1}(T)+g_{n-2}(T).
		\end{equation*}
	\end{enumerate}
	This completes the proof.
\end{proof}
For each integer $n \geq 1$, by virtue of Lemma \ref{lem_fn_gn}, we can write
\begin{equation}\label{eq_FG_1}
S_{k,n+1}^{(a,b)}-S_{k,n}^{(a,b)}=F_{n}(k)a+G_{n}(k)b
\end{equation}
and
\begin{equation}\label{eq_FG_2}
S_{k,n+1}^{(a,b)}+S_{k,n}^{(a,b)}=P_{n}(k)a+Q_{n}(k)b
\end{equation}
for some (unique) $F_{n},G_{n},P_{n},Q_{n} \in \Z[T]$. \\

We may find the degrees of those polynomials explicitly:
\begin{lemma}\label{lem_deg_poly}
	For each integer $n \geq 1,$ we have $\deg F_{n}=\deg P_{n} = n$ and $\deg G_{n} =\deg Q_{n}= n+1$.
\end{lemma}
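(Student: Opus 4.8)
The plan is to work entirely with the recursions for $f_n, g_n$ established in Lemma \ref{lem_fn_gn}, namely $f_n(T) = T f_{n-1}(T) + f_{n-2}(T)$ and $g_n(T) = T g_{n-1}(T) + g_{n-2}(T)$, with $f_1 = 1$, $g_1 = T$, and (reading off from $S_{k,0}^{(a,b)} = 2b$, so that $f_0 = 0$, $g_0 = 2$) the initial data $f_0(T) = 0, g_0(T) = 2$. First I would record the immediate consequence that $F_n = f_{n+1} - f_n$, $G_n = g_{n+1} - g_n$, $P_n = f_{n+1} + f_n$, $Q_n = g_{n+1} + g_n$, which is just (\ref{eq_FG_1}) and (\ref{eq_FG_2}) combined with Lemma \ref{lem_fn_gn}. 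So the whole lemma reduces to understanding the degrees and, crucially, the leading coefficients of $f_n$ and $g_n$.

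Next I would prove by induction on $n$ the sharper statement: $\deg f_n = n-1$ with leading coefficient $1$ (for $n \geq 1$), and $\deg g_n = n$ with leading coefficient $1$ (for $n \geq 1$). The base cases $n=1,2$ are direct from the initial data ($f_1 = 1$, $g_1 = T$, $f_2 = T$, $g_2 = T^2 + 2$). For the inductive step, from $f_n = T f_{n-1} + f_{n-2}$ we get $\deg(T f_{n-1}) = 1 + (n-2) = n-1 > n-3 = \deg f_{n-2}$, so $\deg f_n = n-1$ and the leading coefficient is inherited from $f_{n-1}$, hence $1$; the argument for $g_n$ is identical. Once this is in hand, $\deg F_n = \deg(f_{n+1} - f_n)$: since $\deg f_{n+1} = n > n-1 = \deg f_n$, we get $\deg F_n = n$, and likewise $\deg P_n = \deg(f_{n+1}+f_n) = n$. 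For the $g$'s, $\deg g_{n+1} = n+1 > n = \deg g_n$ gives $\deg G_n = \deg Q_n = n+1$. No cancellation of leading terms can occur because $\deg f_{n+1} \neq \deg f_n$ and $\deg g_{n+1} \neq \deg g_n$ strictly, so the $\pm$ signs are irrelevant to the top-degree term.

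The only genuinely delicate point is making sure the degree bookkeeping is airtight at the very bottom of the induction — in particular handling $f_0 = 0$ correctly (its "degree" is $-\infty$, so the claim $\deg f_n = n-1$ should be stated for $n \geq 1$ and the recursion used only for $n \geq 3$ where both $f_{n-1}, f_{n-2}$ are nonzero, or for $n=2$ verified by hand since $f_0 = 0$ contributes nothing). I expect this edge-case handling to be the main (and only) obstacle; everything else is a one-line degree comparison. An alternative, cleaner route avoiding the $f_0 = 0$ annoyance is to invoke the closed form $S_{k,n}^{(a,b)} = \frac{X\alpha^n - Y\beta^n}{\alpha-\beta}$ with $\alpha,\beta$ the roots of $t^2 - kt - 1$ and $X = a + \sqrt{k^2+4}\,b$, $Y = a - \sqrt{k^2+4}\,b$ (used already in the proof of Theorem \ref{thm_matrix_invertible_1}): then $f_n(k) = F_{k,n} = \frac{\alpha^n - \beta^n}{\alpha - \beta}$ and $g_n(k) = L_{k,n} = \alpha^n + \beta^n$, and the standard fact that $F_{k,n}$ is a monic polynomial in $k$ of degree $n-1$ and $L_{k,n}$ is a monic polynomial in $k$ of degree $n$ finishes the argument the same way. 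I would present the induction as the primary proof and mention the closed form as the conceptual reason.
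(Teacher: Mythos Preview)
Your proof is correct. Interestingly, the paper takes as its \emph{primary} proof what you list as the alternative route: it identifies $f_n(k)=F_{k,n}$ and $g_n(k)=L_{k,n}$, invokes the Binet formulas $F_{k,n}=(\alpha^n-\beta^n)/(\alpha-\beta)$ and $L_{k,n}=\alpha^n+\beta^n$, and appeals to the Binomial Theorem to read off $F_{k,n}=k^{n-1}+(\text{lower})$ and $L_{k,n}=k^n+(\text{lower})$, whence the degrees of $F_n,G_n,P_n,Q_n$ follow by the same top-term comparison you give. Your primary argument, by contrast, is a direct induction on the recurrences $f_n=Tf_{n-1}+f_{n-2}$ and $g_n=Tg_{n-1}+g_{n-2}$ from Lemma~\ref{lem_fn_gn}, tracking both degree and leading coefficient. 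This is more elementary and fully self-contained (no Binet, no binomial expansion), at the cost of the small edge-case bookkeeping around $f_0=0$ that you already flag. Either approach is perfectly adequate here; yours has the minor advantage of not presupposing the closed form, while the paper's is slightly shorter once Binet is available.
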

\begin{proof}
	By construction, we have
	\begin{equation*}
	F_{n}(k)=F_{k,n+1}-F_{k,n}, ~~\textrm{and}~~G_{n}(k)=L_{k,n+1}-L_{k,n}
	\end{equation*}
	for each $k \in \R.$
	Similarly, we also have
	\begin{equation*}
	P_{n}(k)=F_{k,n+1}+F_{k,n}, ~~\textrm{and}~~Q_{n}(k)=L_{k,n+1}+L_{k,n}
	\end{equation*}
	for each $k \in \R$. According to Remark \ref{rm_binet} (see Appendix below), the Binet's formulas for $\{F_{k,n}\}_{n \in \Z_{\geq 0}}$ and $\{L_{k,n}\}_{n \in \Z_{\geq 0}}$
	are given by
	\begin{equation*}
	F_{k,n}=\frac{\alpha^{n} -\beta^{n}}{\alpha - \beta}~~\textrm{and}~~L_{k,n}=\alpha^{n} + \beta^{n}~~\textrm{for each}~n \geq 0
	\end{equation*}
	where $\alpha > \beta$ are the roots of the quadratic equation $x^2-kx-1=0$ so that $\alpha + \beta = k, \alpha \beta=-1$, and hence, it follows from the Binomial Theorem that
	\begin{equation*}
	F_{k,n}=k^{n-1}+\textrm{(lower terms in $k$)}~~\textrm{and}~~L_{k,n}=k^{n} + \textrm{(lower terms in $k$)}.
	\end{equation*}
	Hence we have
	\begin{align*}
	F_{n}(k) & =F_{k,n+1}-F_{k,n} =k^{n} + \textrm{(lower terms in $k$)},\\
	P_{n}(k) & =F_{k,n+1}+F_{k,n} =k^{n} + \textrm{(lower terms in $k$)},\\
	G_{n}(k) & =L_{k,n+1}-L_{k,n}=k^{n+1}+\textrm{(lower terms in $k$)},
	\end{align*}
	and
	\begin{equation*}
	Q_{n}(k) =L_{k,n+1}+L_{k,n}=k^{n+1}+\textrm{(lower terms in $k$)}.
	\end{equation*}
	Then since these equations hold for any $k \in \R$, and since $F_{n}, G_{n},P_{n},Q_{n} \in \Z[T]$, it follows that $\deg F_{n} =\deg P_{n} =n$ and $\deg G_{n} =\deg Q_{n} = n+1$, as desired.
\end{proof}
\begin{example}\label{ex_FGPQ}
	The following table gives the first four $F_{n}$'s, $G_{n}$'s, $P_{n}$'s, and $Q_{n}$'s: 
	\begin{center}
	\begin{tabular}{|c|c|c|c|c|}
		\hline
		$n$ & 0 & 1 &2 &3 \\
		\hline
		$F_{n}(k)$ & $1$ & $k-1$ & $k^2-k+1$& $k^3-k^2+2k-1$  \\
		\hline
		$G_{n}(k)$ & $k-2$ & $k^2-k+2$ & $k^3-k^2+3k-2$& $k^4-k^3+4k^2-3k+2$ \\
		\hline
		$P_{n}(k)$ & $ 1 $ & $ k+1  $ & $ k^2+k+1  $& $ k^3+k^2+2k+1  $ \\
		\hline
		$Q_{n}(k)$ & $ k+2 $ & $ k^2+k+2  $ & $ k^3+k^2+3k+2  $& $ k^4+k^3+4k^2+3k+2  $ \\
		\hline
	\end{tabular}
	\end{center}
\end{example}
Now, for each integer $n \geq 1$, we define three subsets of $\R$ as follows:
\begin{align*}
A_{n} & =\{k \in \R~|~F_{n} (k)+1=0 ~\textrm{or}~G_{n} (k)+k-2=0 \},\\
B_{n} & =\{k \in \R~|~F_{n} (k)-1=0 ~\textrm{or}~G_{n} (k)-k+2=0 \}
\end{align*}
and
\begin{equation*}
C_{n} = \{k \in \R~|~P_{n}(k)-1=0~\textrm{or}~Q_{n}(k)-k-2=0\}.
\end{equation*}
Clearly, $A_{n}, B_{n}, C_{n}$ are all finite by Lemma \ref{lem_deg_poly} for each $n \geq 1$.
\begin{example}[$A_{3}, B_{3}, C_{3}$]
	We first claim that $|A_{3}|=2$. Indeed, we have
	\begin{equation*}
	A_{3} =\{k \in \R~|~F_{3}(k)+1=0~\textrm{or}~G_{3}(k)+k-2=0 \}.
	\end{equation*}
	
	From Example \ref{ex_FGPQ}, we know that $F_{3}(k)+1=k^3-k^2+2k=k(k^2-k+2)=0$, and hence $0 \in A_{3}$. Also, we have $G_{3}(k)+k-2=k^4-k^3+4k^2-2k=k(k^3-k^2+4k-2)=0$. Since there is a unique real solution for the equation $k^3-k^2+4k-2=0,$ it follows that $|A_{3}|=2.$ In particular, $A_{3}$ is finite. \\
	Next, we claim that $B_{3}=\{1\}.$ Indeed, we have
	\begin{equation*}
	B_{3} =\{k \in \R~|~F_{3}(k)-1=0~\textrm{or}~G_{3}(k)-k+2=0 \}.
	\end{equation*}
	From Example \ref{ex_FGPQ}, we know that $F_{3}(k)-1=k^3-k^2+2k-2=(k-1)(k^2+2)=0$, and hence $1 \in B_{3}$. Also, we have $G_{3}(k)-k+2=k^4-k^3+4k^2-4k+4 >0$ for all $k \in \R$. Thus, we have $B_{3} =\{1\}$. In particular, $B_{3}$ is finite. \\
	Finally, we claim that $|C_{3}|=2$. Indeed, we have
	\begin{equation*}
	C_{3} =\{k \in \R~|~P_{3}(k)-1=0~\textrm{or}~Q_{3}(k)-k-2=0 \}.
	\end{equation*}
	From Example \ref{ex_FGPQ}, we know that $P_{3}(k)-1=k^3+k^2+2k=k(k^2+k+2)=0$, and hence $0 \in C_{3}$. Also, we have $G_{3}(k)-k-2=k^4+k^3+4k^2+2k=k(k^3+k^2+4k+2)=0$. Since there is a unique real solution for the equation $k^3+k^2+4k+2=0$, it follows that $|C_{3}|=2$. In particular, $C_{3}$ is finite.
\end{example}
Now, we are interested in solving special kind of equations related to a generalized $k$-FL sequence:
\begin{lemma}\label{lem_uniquepair_1}
	Let $n \geq 1$ be an integer. Then there is a unique pair $(a,b) \ne (0,0)$ (in terms of $k$) satisfying both of the equations $S^{(a,b)}_{k,n+1}-S^{(a,b)}_{k,n}=-bk+2b-a$ and $4a^3+27b^2=0$ for each $k \not \in A_{n}$.
\end{lemma}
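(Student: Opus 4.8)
The plan is to reduce the two-equation system to a single homogeneous linear constraint on $(a,b)$ together with the discriminant equation, and then solve it explicitly. First I would rewrite the first equation using (\ref{eq_FG_1}): since $S_{k,n+1}^{(a,b)}-S_{k,n}^{(a,b)}=F_{n}(k)a+G_{n}(k)b$, the equation $S_{k,n+1}^{(a,b)}-S_{k,n}^{(a,b)}=-bk+2b-a$ is equivalent to
\begin{equation*}
(F_{n}(k)+1)\,a+(G_{n}(k)+k-2)\,b=0.
\end{equation*}
The hypothesis $k\notin A_{n}$ says precisely that \emph{both} coefficients $\mu:=F_{n}(k)+1$ and $\nu:=G_{n}(k)+k-2$ are nonzero. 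Hence the solution set of this linear equation is the line $\{(a,b)=(-\nu t,\,\mu t):t\in\R\}$ through the origin, with $\mu\nu\neq 0$.

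Next I would substitute this parametrization into $4a^{3}+27b^{2}=0$, obtaining
\begin{equation*}
4(-\nu t)^{3}+27(\mu t)^{2}=t^{2}\bigl(-4\nu^{3}t+27\mu^{2}\bigr)=0,
\end{equation*}
so that either $t=0$ — which forces $(a,b)=(0,0)$ and is excluded by hypothesis — or $t=\dfrac{27\mu^{2}}{4\nu^{3}}$, which is well-defined and nonzero because $\mu,\nu\neq 0$. This yields exactly one admissible pair, namely
\begin{equation*}
a=-\frac{27\mu^{2}}{4\nu^{2}},\qquad b=\frac{27\mu^{3}}{4\nu^{3}},\qquad \mu=F_{n}(k)+1,\ \ \nu=G_{n}(k)+k-2,
\end{equation*}
which is visibly a function of $k$; a direct substitution confirms that both equations are satisfied. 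Uniqueness is immediate, since any solution must lie on the line above and the only nonzero value of the parameter $t$ producing a point of the curve $4a^{3}+27b^{2}=0$ is the one just found.

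I do not expect a genuine obstacle here. The only points requiring care are bookkeeping: one must use the hypothesis $k\notin A_{n}$ in full — the alternative $\nu\neq 0$ is what lets us treat the linear equation as having rank one and divide by $\nu^{3}$, while $\mu\neq 0$ is what guarantees that the surviving root $t$ is nonzero, hence that the resulting pair $(a,b)$ is genuinely distinct from $(0,0)$ — and one must observe that the spurious double root $t=0$ of the cubic-in-$t$ is exactly the pair excluded in the statement.
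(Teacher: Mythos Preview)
Your proof is correct and follows essentially the same approach as the paper: both rewrite the first equation via (\ref{eq_FG_1}) as the linear constraint $(F_n(k)+1)a+(G_n(k)+k-2)b=0$, use $k\notin A_n$ to ensure both coefficients are nonzero, and then intersect this line with the curve $4a^3+27b^2=0$ to extract the unique nonzero solution. The only cosmetic difference is that you parametrize the line by $(-\nu t,\mu t)$ whereas the paper solves for $a$ in terms of $b$, but the resulting formulas and the logic are identical.
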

\begin{proof}
	By the relation (\ref{eq_FG_1}) above, the first equation becomes
	\begin{equation}\label{eq_FG_3}
	(F_{n}(k)+1)a + (G_{n}(k)+k-2)b = 0.
	\end{equation}
	Assume $(a,b)\ne (0,0).$ Since $k \not \in A_{n}$, we have $F_{n}(k)+1 \ne 0$ and $G_{n}(k)+k-2 \ne 0$. Then from the equation (\ref{eq_FG_3}), we get $a=-\frac{G_{n}(k)+k-2}{F_{n}(k)+1}b$. After equating with the second equation, we can easily see that there is a unique solution (other than $(0,0)$) in terms of $k$, namely, $a=-\frac{27(F_{n}(k)+1)^{2}}{4(G_{n}(k)+k-2)^{2}}$, $b=\frac{27(F_{n}(k)+1)^{3}}{4(G_{n}(k)+k-2)^{3}}$.
\end{proof}
\begin{lemma}\label{lem_uniquepair_2}
	Let $n \geq 1$ be an integer. Then there is a unique pair $(a,b) \ne (0,0)$ (in terms of $k$) satisfying both of the equations $S^{(a,b)}_{k,n+1}-S^{(a,b)}_{k,n}=bk-2b+a$ and $4a^3+27b^2=0$ for each $k \not \in B_{n}$.
\end{lemma}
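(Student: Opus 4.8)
The plan is to mimic the proof of Lemma \ref{lem_uniquepair_1} almost verbatim, the only change being that we work with the alternative sign convention encoded in $B_n$ rather than $A_n$. First I would invoke the relation (\ref{eq_FG_1}): the equation $S^{(a,b)}_{k,n+1}-S^{(a,b)}_{k,n}=bk-2b+a$ becomes
\begin{equation*}
  (F_{n}(k)-1)a + (G_{n}(k)-k+2)b = 0.
\end{equation*}
Since $k \not\in B_{n}$, by the very definition of $B_{n}$ we have $F_{n}(k)-1 \ne 0$ and $G_{n}(k)-k+2 \ne 0$, so assuming $(a,b) \ne (0,0)$ we may solve $a = -\frac{G_{n}(k)-k+2}{F_{n}(k)-1}\,b$ (and note $b \ne 0$, else $a=0$ too).

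Next I would substitute this linear relation into the singularity condition $4a^3+27b^2=0$. Writing $\lambda(k) := -\frac{G_{n}(k)-k+2}{F_{n}(k)-1}$, so that $a=\lambda b$, the equation $4\lambda^3 b^3 + 27 b^2 = 0$ factors as $b^2(4\lambda^3 b + 27) = 0$; since $b \ne 0$ this forces $b = -\frac{27}{4\lambda^3}$, which is well-defined because $\lambda \ne 0$ (as $G_n(k)-k+2 \ne 0$). This determines $b$ uniquely in terms of $k$, and then $a = \lambda b$ is determined as well, giving the explicit pair
\begin{equation*}
  a = -\frac{27(F_{n}(k)-1)^{2}}{4(G_{n}(k)-k+2)^{2}}, \qquad b = \frac{27(F_{n}(k)-1)^{3}}{4(G_{n}(k)-k+2)^{3}},
\end{equation*}
entirely parallel to Lemma \ref{lem_uniquepair_1} with $F_n(k)+1$ replaced by $F_n(k)-1$ and $G_n(k)+k-2$ replaced by $G_n(k)-k+2$. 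Conversely one checks this pair indeed satisfies both equations, and it is manifestly nonzero, which establishes existence; uniqueness follows since every step above was forced.

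There is essentially no obstacle here — the argument is routine linear algebra plus one cubic substitution, and the set $B_n$ was defined precisely so that the two relevant coefficients are nonzero. The only point requiring a word of care is the degenerate case analysis: one must confirm that $(a,b)\ne(0,0)$ together with the linear relation forces $b\ne0$ (hence $a$ is genuinely determined rather than free), and that $\lambda(k)\ne 0$ so that division by $\lambda^3$ is legitimate; both are immediate from $k \notin B_n$. Thus the proof is a direct transcription of the proof of Lemma \ref{lem_uniquepair_1}.
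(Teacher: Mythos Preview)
Your proof is correct and follows essentially the same approach as the paper's own proof: rewrite the first equation via (\ref{eq_FG_1}) as $(F_n(k)-1)a+(G_n(k)-k+2)b=0$, use $k\notin B_n$ to solve $a$ in terms of $b$, and substitute into $4a^3+27b^2=0$ to obtain the explicit unique pair. Your write-up is in fact slightly more careful than the paper's in spelling out why $b\ne 0$ and $\lambda(k)\ne 0$.
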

\begin{proof}
	By the relation (\ref{eq_FG_1}) above, the first equation becomes
	\begin{equation}\label{eq_FG_4}
	(F_{n}(k)-1)a + (G_{n}(k)-k+2)b = 0.
	\end{equation}
	Assume $(a,b)\ne (0,0)$. Since $k \not \in B_{n}$, we have $F_{n}(k)-1 \ne 0$ and $G_{n}(k)-k+2 \ne 0$. Then from the equation (\ref{eq_FG_4}), we get $a=-\frac{G_{n}(k)-k+2}{F_{n}(k)-1}b$. After equating with the second equation, we can easily see that there is a unique solution (other than $(0,0)$) in terms of $k$, namely, $a=-\frac{27(F_{n}(k)-1)^{2}}{4(G_{n}(k)-k+2)^{2}}$, $b=\frac{27(F_{n}(k)-1)^{3}}{4(G_{n}(k)-k+2)^{3}}.$
\end{proof}
Finally, we also have
\begin{lemma}\label{lem_uniquepair_3}
	Let $n \geq 1$ be an integer. Then there is a unique pair $(a,b) \ne (0,0)$ (in terms of $k$) satisfying both of the equations $S^{(a,b)}_{k,n+1}+S^{(a,b)}_{k,n}=bk+2b+a$ and $4a^3+27b^2=0$ for each $k \not \in C_{n}$.
\end{lemma}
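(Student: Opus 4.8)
The plan is to follow the same two-step recipe used for Lemmas \ref{lem_uniquepair_1} and \ref{lem_uniquepair_2}, now with the relation (\ref{eq_FG_2}) playing the role that (\ref{eq_FG_1}) played there. First I would use (\ref{eq_FG_2}) to rewrite the equation $S^{(a,b)}_{k,n+1}+S^{(a,b)}_{k,n}=bk+2b+a$ as the single homogeneous linear relation
\begin{equation*}
  (P_{n}(k)-1)\,a + (Q_{n}(k)-k-2)\,b = 0 .
\end{equation*}
The hypothesis $k\notin C_{n}$ says precisely that neither coefficient $P_{n}(k)-1$ nor $Q_{n}(k)-k-2$ vanishes, so this relation lets me solve $a=-\dfrac{Q_{n}(k)-k-2}{P_{n}(k)-1}\,b$.

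Second, I would substitute this expression into the singularity condition $4a^{3}+27b^{2}=0$, which yields
\begin{equation*}
  b^{2}\left(27 - 4\,\frac{(Q_{n}(k)-k-2)^{3}}{(P_{n}(k)-1)^{3}}\,b\right)=0 .
\end{equation*}
If $b=0$ then $a=0$ as well, contradicting $(a,b)\neq(0,0)$; hence $b\neq0$, and dividing by $b^{2}$ determines $b$, and then $a$, uniquely:
\begin{equation*}
  a=-\frac{27(P_{n}(k)-1)^{2}}{4(Q_{n}(k)-k-2)^{2}},\qquad b=\frac{27(P_{n}(k)-1)^{3}}{4(Q_{n}(k)-k-2)^{3}} .
\end{equation*}
Conversely, this pair is nonzero (since $P_{n}(k)-1\neq0$), and a direct substitution confirms it satisfies both equations, giving existence; the derivation above gives uniqueness among nonzero pairs.

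There is essentially no obstacle here: the argument is a verbatim adaptation of the two preceding lemmas, and the only place the hypothesis is used is to guarantee that the two denominators $P_{n}(k)-1$ and $Q_{n}(k)-k-2$ do not vanish (which is exactly the meaning of $k\notin C_{n}$), with finiteness of $C_{n}$ coming from Lemma \ref{lem_deg_poly}. If anything merits care, it is merely bookkeeping of signs when moving $bk+2b+a$ to the left-hand side and tracking them through the substitution into $4a^{3}+27b^{2}=0$.
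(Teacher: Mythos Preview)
Your proposal is correct and is essentially identical to the paper's own proof: both rewrite the first equation via (\ref{eq_FG_2}) as $(P_{n}(k)-1)a+(Q_{n}(k)-k-2)b=0$, use $k\notin C_{n}$ to solve for $a$ in terms of $b$, substitute into $4a^{3}+27b^{2}=0$, and arrive at the same explicit formulas for $a$ and $b$. Your version is slightly more detailed in displaying the intermediate factorization $b^{2}(27-4\lambda^{3}b)=0$, but the argument is the same.
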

\begin{proof}
	By the relation (\ref{eq_FG_2}) above, the first equation becomes
	\begin{equation}\label{eq_PQ_1}
	(P_{n}(k)-1)a + (Q_{n}(k)-k-2)b = 0.
	\end{equation}
	Assume $(a,b)\ne (0,0)$. Since $k \not \in C_{n}$, we have $P_{n}(k)-1 \ne 0$ and $Q_{n}(k)-k-2 \ne 0$. Then from the equation (\ref{eq_PQ_1}), we get $a=-\frac{Q_{n}(k)-k-2}{P_{n}(k)-1}b$. After equating with the second equation, we can easily see that there is a unique solution (other than $(0,0)$) in terms of $k$, namely, $a=-\frac{27(P_{n}(k)-1)^{2}}{4(Q_{n}(k)-k-2)^{2}}$, $b=\frac{27(P_{n}(k)-1)^{3}}{4(Q_{n}(k)-k-2)^{3}}$.
\end{proof}
These lemmas lead to the following three new concepts:
\begin{definition}
	Let $n \geq 1$ be an integer. For each $k \not \in A_{n}$, we define the \emph{singular $k$-FL pair of level $n$ of Type 1} to be the unique pair $(a,b) \ne (0,0)$ of real numbers of the form given as in Lemma \ref{lem_uniquepair_1}. We will denote the singular $k$-FL pair of level $n$ of Type 1 by $(a_{k,1}^{(n)},b_{k,1}^{(n)}).$
\end{definition}
Similarly,
\begin{definition}
	Let $n \geq 1$ be an integer. For each $k \not \in B_{n}$, we define the \emph{singular $k$-FL pair of level $n$ of Type 2} to be the unique pair $(a,b) \ne (0,0)$ of real numbers of the form given as in Lemma \ref{lem_uniquepair_2}. We will denote the singular $k$-FL pair of level $n$ of Type 2 by $(a_{k,2}^{(n)},b_{k,2}^{(n)}).$
\end{definition}
Finally,
\begin{definition}
	Let $n \geq 1$ be an integer. For each $k \not \in C_{n}$, we define the \emph{singular $k$-FL pair of level $n$ of Type 3} to be the unique pair $(a,b) \ne (0,0)$ of real numbers of the form given as in Lemma \ref{lem_uniquepair_3}. We will denote the singular $k$-FL pair of level $n$ of Type 3 by $(a_{k,3}^{(n)},b_{k,3}^{(n)}).$
\end{definition}
Here comes an example of each kind:
\begin{example}[Singular $k$-FL pair of level $2$ of Type 1]
	Let $n=2$. From Example \ref{ex_FGPQ}, we have $F_{2}(k)=k^2-k+1, G_{2}(k)=k^{3}-k^{2}+3k-2$ so that $A_{2} =\{1\}.$ Now, for each $k \in \R \setminus \{1\},$ the singular $k$-FL pair of level $2$ of Type 1 is given by $(a^{(2)}_{k,1},b^{(2)}_{k,1})$ where
	\begin{equation*}
	a^{(2)}_{k,1}= -\frac{27(k^2-k+2)^{2}}{4(k^3-k^2+4k-4)^{2}}~~\textrm{and}~~ b^{(2)}_{k,1}=\frac{27(k^2-k+2)^{3}}{4(k^3-k^2+4k-4)^{3}}.
	\end{equation*}
\end{example}
Similarly,
\begin{example}[Singular $k$-FL pair of level $2$ of Type 2]
	Let $n=2$. From Example \ref{ex_FGPQ}, we have $F_{2}(k)=k^2-k+1, G_{2}(k)=k^{3}-k^{2}+3k-2$ so that $B_{2} =\{0,1\}.$ Now, for each $k \in \R \setminus \{0,1\},$ the singular $k$-FL pair of level $2$ of Type 2 is given by $(a^{(2)}_{k,2},b^{(2)}_{k,2})$ where
	\begin{equation*}
	a^{(2)}_{k,2}= -\frac{27(k-1)^{2}}{4(k^2-k+2)^{2}}~~\textrm{and}~~ b^{(2)}_{k,2}=\frac{27(k-1)^{3}}{4(k^2-k+2)^{3}}.
	\end{equation*}
\end{example}
Finally,
\begin{example}[Singular $k$-FL pair of level $2$ of Type 3]
	Let $n=2$. From Example \ref{ex_FGPQ}, we have $P_{2}(k)=k^2+k+1, Q_{2}(k)=k^{3}+k^{2}+3k+2$ so that $C_{2} =\{-1,0\}.$ Now, for each $k \in \R \setminus \{-1,0\},$ the singular $k$-FL pair of level $2$ of Type 3 is given by $(a^{(2)}_{k,3},b^{(2)}_{k,3})$ where
	\begin{equation*}
	a^{(2)}_{k,3}= -\frac{27(k+1)^{2}}{4(k^2+k+2)^{2}}~~\textrm{and}~~ b^{(2)}_{k,3}=\frac{27(k+1)^{3}}{4(k^2+k+2)^{3}}.
	\end{equation*}
\end{example}
Now, fix an integer $n \geq 1$. For $k \not \in B_{n}$, let $(a,b)=(a_{k,2}^{(n)},b_{k,2}^{(n)})$ be a singular $k$-FL pair of level $n$ of Type 2. Associated to $(a,b)$, we introduce a singular plane cubic curve as follows: let $E_{(a,b)}$ be a curve defined by the equation $y^{2} =x^{3}+ax+b.$ Since $(a,b)$ satisfies the equation $4a^{3} +27b^{2} =0$, $E_{(a,b)}$ is singular. Note that $E_{(a,b)}$ is not necessarily defined over $\Q$. \\
Motivated by the above observation, let
\begin{equation*}
S_2^{(n)}=\{k \in \R \setminus B_n~|~E_{(a,b)}~\textrm{is defined over}~\Q \}
\end{equation*}
for each integer $n \geq 1$. Then since $\Q \setminus B_{n} \subseteq S_{2}^{(n)},$ we can see that $S_{2}^{(n)}$ is infinite. On the other hand, the set $S_{2}^{(n)}$ turns out to be not too big in the sense of Theorem \ref{thm_s2_countable} below. Before stating the theorem, we need a lemma:
\begin{lemma}\label{lem_FG_equiv}
	Let $F,G \in \Z[T]$ be two polynomials. For any real number $k$ with $F(k)-1 \ne 0$ and $G(k)-k+2 \ne 0,$ let
	\begin{equation*}
	a(k)=-\frac{27(F(k)-1)^{2}}{4(G(k)-k+2)^{2}}~~\textrm{and}~~b(k)=\frac{27(F(k)-1)^{3}}{4(G(k)-k+2)^{3}}.
	\end{equation*}
	Then the following are equivalent:
	\begin{enumerate}
		\item[(i)] $a(k),b(k) \in \Q$;
		\item[(ii)] $\frac{(F(k)-1)^{2}}{(G(k)-k+2)^{2}}, \frac{(F(k)-1)^{3}}{(G(k)-k+2)^{3}} \in \Q$;
		\item[(iii)] $\frac{F(k)-1}{G(k)-k+2} \in \Q.$
	\end{enumerate}
\end{lemma}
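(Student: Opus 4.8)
The plan is to reduce everything to a single auxiliary quantity. Set
\begin{equation*}
  r = r(k) = \frac{F(k)-1}{G(k)-k+2},
\end{equation*}
which is a well-defined real number because $G(k)-k+2 \ne 0$, and which is \emph{nonzero} because $F(k)-1 \ne 0$; this nonvanishing is the only point that requires any attention at all. With this notation, directly from the definitions we have
\begin{equation*}
  a(k) = -\frac{27}{4}\,r^{2}, \qquad b(k) = \frac{27}{4}\,r^{3}, \qquad
  \frac{(F(k)-1)^{2}}{(G(k)-k+2)^{2}} = r^{2}, \qquad \frac{(F(k)-1)^{3}}{(G(k)-k+2)^{3}} = r^{3}.
\end{equation*}

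First I would prove (i) $\Leftrightarrow$ (ii). Since $-\tfrac{27}{4}$ and $\tfrac{27}{4}$ are nonzero rational numbers, multiplication by them and by their inverses preserves $\Q$; hence $a(k)\in\Q \Leftrightarrow r^{2}\in\Q$ and $b(k)\in\Q \Leftrightarrow r^{3}\in\Q$, and combining the two biconditionals gives the equivalence of (i) and (ii). Next, (iii) $\Rightarrow$ (ii) is immediate: if $r\in\Q$, then $r^{2}=r\cdot r\in\Q$ and $r^{3}=r\cdot r\cdot r\in\Q$ because $\Q$ is a ring. Finally, for (ii) $\Rightarrow$ (iii), suppose $r^{2}\in\Q$ and $r^{3}\in\Q$; since $r\ne 0$ we have $r^{2}\ne 0$, so $r = r^{3}/r^{2}$ is a quotient of two rationals with nonzero denominator, hence $r\in\Q$. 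Chaining (iii) $\Rightarrow$ (ii) $\Rightarrow$ (iii) and (i) $\Leftrightarrow$ (ii) yields the three-way equivalence.

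There is essentially no obstacle here; the statement is a formal consequence of the fact that $\Q$ is a field together with the observation $r\ne 0$, which lets one recover $r$ from $r^{2}$ and $r^{3}$. The only thing worth flagging is that the hypotheses $F(k)-1\ne 0$ and $G(k)-k+2\ne 0$ are exactly what makes $r$ a well-defined nonzero element, so that both the formation of the ratio in (iii) and the division $r^{3}/r^{2}$ are legitimate; without $F(k)-1\ne 0$ the implication (ii) $\Rightarrow$ (iii) would still hold trivially (both sides would be $0\in\Q$), but the clean uniform argument above is preferable.
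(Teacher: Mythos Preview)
Your proof is correct and follows essentially the same approach as the paper: the paper proves the cycle $(i)\Rightarrow(ii)\Rightarrow(iii)\Rightarrow(i)$ using that $\tfrac{27}{4}\in\Q$ and that $\frac{F(k)-1}{G(k)-k+2}$ equals the quotient of the cube by the square, which is exactly your identity $r=r^{3}/r^{2}$. Your introduction of the shorthand $r$ and your explicit emphasis on why $r\ne 0$ is needed make the argument a bit cleaner, but the substance is identical.
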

\begin{proof}
	$(i) \Rightarrow (ii)$ follows from the fact that $\frac{27}{4} \in \Q.$ \\
	$(ii) \Rightarrow (iii)$ follows from the fact that $\frac{F(k)-1}{G(k)-k+2}=\frac{\frac{(F(k)-1)^{3}}{(G(k)-k+2)^{3}}}{\frac{(F(k)-1)^{2}}{(G(k)-k+2)^{2}}}$ if $F(k)-1 \ne 0$ and $G(k)-k+2 \ne 0$.  \\
	$(iii) \Rightarrow (i)$ Trivial.
\end{proof}
Now, we prove the following
\begin{theorem}\label{thm_s2_countable}
	$S_2^{(n)}$ is countable for any integer $n \geq 1$.
\end{theorem}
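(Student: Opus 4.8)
The plan is to reduce the statement to a question about which values of $k$ make a fixed rational function of $k$ rational, and then to exhibit $S_2^{(n)}$ as a countable union of finite sets.

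First I would unwind the definitions. For $k \in \R \setminus B_n$ the singular $k$-FL pair $(a,b)=(a_{k,2}^{(n)},b_{k,2}^{(n)})$ is given by the formulas of Lemma \ref{lem_uniquepair_2}, that is, by the formulas of Lemma \ref{lem_FG_equiv} with $F=F_n$ and $G=G_n$, and by definition $E_{(a,b)}$ is defined over $\Q$ exactly when $a,b \in \Q$. Since $k \notin B_n$ guarantees $F_n(k)-1 \ne 0$ and $G_n(k)-k+2 \ne 0$, Lemma \ref{lem_FG_equiv} applies and yields that $a,b \in \Q$ if and only if $\frac{F_n(k)-1}{G_n(k)-k+2} \in \Q$. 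Hence
\begin{equation*}
  S_2^{(n)}=\left\{k \in \R \setminus B_n ~:~ \frac{F_n(k)-1}{G_n(k)-k+2} \in \Q\right\}.
\end{equation*}

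Next I would decompose this set according to the rational value taken by the ratio:
\begin{equation*}
  S_2^{(n)}=\bigcup_{q \in \Q}\Bigl\{k \in \R \setminus B_n ~:~ (F_n(k)-1)-q\,(G_n(k)-k+2)=0\Bigr\}.
\end{equation*}
For each fixed $q \in \Q$, set $h_q(T)=(F_n(T)-1)-q\,(G_n(T)-T+2) \in \Q[T]$. The key point is that $h_q$ is never the zero polynomial: by Lemma \ref{lem_deg_poly} the polynomial $G_n(T)-T+2$ has degree $n+1$ with leading coefficient $1$, so $\deg h_q=n+1$ when $q \ne 0$, while $h_q=F_n(T)-1$ has degree $n \geq 1$ when $q=0$. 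Thus each $h_q$ has at most $n+1$ real roots, and so every set in the union above is finite.

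Finally, since $\Q$ is countable, $S_2^{(n)}$ is a countable union of finite sets and is therefore countable. The argument is essentially routine; the only delicate point is the verification that $h_q$ does not vanish identically for any $q$, which is exactly where the degree computation of Lemma \ref{lem_deg_poly} is needed.
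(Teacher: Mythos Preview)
Your proof is correct and follows essentially the same route as the paper's: both reduce via Lemma \ref{lem_FG_equiv} to the set of $k$ for which $\frac{F_n(k)-1}{G_n(k)-k+2}$ is rational, and then exhibit this as a countable union (indexed by $\Q$, or equivalently by coprime pairs in $\Z^2$) of solution sets of nonzero polynomials of degree at most $n+1$, using Lemma \ref{lem_deg_poly}. Your version is in fact slightly more careful, since you explicitly verify that the polynomial $h_q$ is never identically zero, a point the paper leaves implicit.
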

\begin{proof} Fix an integer $n \geq 1.$ For each $k \not \in B_n,$ as in the proof of Lemma \ref{lem_uniquepair_2}, we can write
	\begin{equation*}
	a= a_{k,2}^{(n)}=-\frac{27(F_n(k)-1)^2}{4(G_n(k)-k+2)^2}~~\textrm{and}~~ b=b_{k,2}^{(n)}=\frac{27(F_n(k)-1)^3}{4(G_n(k)-k+2)^3}
	\end{equation*}
	for some $F_n,G_n \in \Z[T].$ Now, $E_{(a,b)}$ is defined over $\Q$ if and only if $a,b \in \Q.$
	By Lemma \ref{lem_FG_equiv}, it suffices to find $k \in \R \setminus B_n$ such that $\frac{F_n(k)-1}{G_n(k)-k+2} \in \Q.$ Assume that $\frac{F_n(k)-1}{G_n(k)-k+2} = \frac{q}{p}$ for some $p,q \in \Z$ with $\gcd(p,q)=1.$ By equating, we have
	\begin{equation}\label{eq_FG_5}
	qG_n(k)-pF_n(k)-qk+2q+p=0.
	\end{equation}
	By Lemma \ref{lem_deg_poly}, for each $(p,q) \in \Z^2$ with $\gcd(p,q)=1$, there are at most $n+1$ real roots of (\ref{eq_FG_5}) in $k$, and there are only countably many such pairs $(p,q)$ (since $\Z^2$ is countable). Hence it follows that $S_2^{(n)}$ is countable.
\end{proof}
We may have similar results for singular $k$-FL pairs of level $n$ of Type 1 or Type 3, too.

\section{Applications}\label{sec_application}

Our first application gives some information about the cardinality of the Zsigmondy set of integer sequences in view of Introduction. \\

\textbf{Application 1}~~Let $\mathcal{A}=\{a_n \}_{n \in \Z_{\geq 0}}$ be an integer sequence. A prime number $p$ is called a \emph{primitive prime divisor for} $a_n$ if $p~|~a_n$, but $p \nmid a_m$ for all $0 \leq m < n$ with $a_m \ne 0$ \cite{Silverman2013}. The \emph{Zsigmondy set} of $\mathcal{A}$ is defined to be
\begin{equation*}
\mathcal{Z}(\mathcal{A})=\{n \geq 0~|~a_n~\textrm{has no primitive prime divisors}\}.
\end{equation*}

It seems to be an interesting question to ask whether the Zsigmondy set of a given sequence $\mathcal{A}=\{a_n\}_{n \in \Z_{\geq 0}}$ is finite or not. In this direction, there are several results in the literature, one of which, we introduce here for our later use.

\begin{example}[Carmichael\cite{Carmichael1913}]\label{exa_Carmichael_1}
	Let $\mathcal{A}=\{F_n\}_{n \in \Z_{\geq 0}}$ be the classical Fibonacci sequence. Then $\mathcal{Z}(\mathcal{A})=\{1,2,6,12\}.$ In particular, $\mathcal{Z}(\mathcal{A})$ is bounded.
\end{example}

Adopting one of the idea given in Introduction, we generalize this concept to a family of integer sequences in the following sense: let $\mathcal{F}$ be a family of integer sequences. We let
\begin{equation*}
N_{\mathcal{F}}=\sup_{\mathcal{A}:=\{a_n\}_{n \in \Z_{\geq 0}} \in \mathcal{F}} |\mathcal{Z}(\mathcal{A})|
\end{equation*}
where $|S|$ denotes the cardinality of a set $S.$ We can ask whether $N_{\mathcal{F}}$ is finite or not. A classical result of Zsigmondy gives an example of $N_{\mathcal{F}}$ being finite.

\begin{example}(Zsigmondy\cite{Zsigmondy1892})
	Let $\mathcal{F}$ be a family of the integer sequences of the form $\{u^n - v^n\}_{n \in \Z_{\geq 0}}$ where $u>v>0$ are relatively prime integers. Then $N_{\mathcal{F}} = 3.$ In particular, $N_{\mathcal{F}}$ is bounded.
\end{example}

Also, Example \ref{exa_Carmichael_1} can be restated as the following

\begin{example}
	Let $\mathcal{F}$ be the family that consists of the classical Fibonacci sequence. In other words, we let $\mathcal{F}=\{\{F_n\}_{n \in \Z_{\geq 0}}\}$ where $F_n$ denotes the $n$-th Fibonacci number. Then $N_{\mathcal{F}}=4.$
\end{example}

Before answering Question \ref{que_Zsigmondy_1} of Introduction, we need to look at the following classical result of Carmichael in more detail: let $P,Q$ be two nonzero integers, and put
\begin{equation*}
U_n(P,Q):=\frac{\alpha^n - \beta^n }{\alpha - \beta},~~~V_n (P,Q):=\alpha^n + \beta^n ~~~(n \geq 0)
\end{equation*}
where $\alpha, \beta$ are the zeros of $x^2 - Px +Q.$ Then we have the following

\begin{theorem}[Carmichael\cite{Carmichael1913}]\label{thm_Carmichael_2}
	(a) If $D>0$, then for all integers $n \geq 0$ with $n \not \in \{1,2,6\}$,  $U_n$ has a primitive prime divisor, except for the case when $n=12, P=\pm 1, Q=-1.$ \\
	(b) If $D$ is a perfect square, then for all integers $n \geq 0$, $U_n$ has a primitive prime divisor, except for the case when $n=6, P=\pm 3, Q=2.$ (This includes the Zsigmondy's theorem.)
\end{theorem}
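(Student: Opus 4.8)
Since this is Carmichael's theorem \cite{Carmichael1913}, whose proof is long, I describe the route I would take (under Carmichael's standing hypotheses, notably $\gcd(P,Q)=1$ and $D\neq 0$); the same circle of ideas contains Zsigmondy's theorem \cite{Zsigmondy1892}, which is essentially part (b) in the case $\alpha,\beta\in\Z$.

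The plan is to factor $U_n$ through homogenized cyclotomic polynomials. Writing $\Phi_d(X,Y)=\prod_{\zeta}(X-\zeta Y)$, the product over the primitive $d$-th roots of unity $\zeta$, one has $\alpha^n-\beta^n=\prod_{d\mid n}\Phi_d(\alpha,\beta)$, and since $\Phi_1(\alpha,\beta)=\alpha-\beta$ this gives $U_n=\prod_{1<d\mid n}\Phi_d(\alpha,\beta)$. Because $\Phi_d$ is palindromic for $d\geq 2$, each $\Phi_d(X,Y)$ is symmetric, hence each $\Phi_d(\alpha,\beta)$ is a symmetric function of the roots of $X^2-PX+Q$ and therefore a rational integer dividing $U_n$; one also has the clean identity $\Phi_n(\alpha,\beta)=\prod_{d\mid n}(\alpha^d-\beta^d)^{\mu(n/d)}$. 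The decisive consequence of this decomposition is that any primitive prime divisor of $U_n$ must divide the single factor $\Phi_n(\alpha,\beta)$, so the whole problem reduces to understanding that number.

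Next I would split the prime divisors of $\Phi_n(\alpha,\beta)$ into those that are primitive for $U_n$ and those that are not. Using the law of apparition, $p\mid U_m\iff\rho(p)\mid m$ for $p\nmid 2QD$, where $\rho(p)$ is the rank of apparition, together with the intrinsic/extrinsic dichotomy ($p\mid\Phi_n(\alpha,\beta)$ with $\rho(p)<n$ forces $n/\rho(p)$ to be a power of $p$, hence $p\mid n$), one shows that every non-primitive prime divisor of $\Phi_n(\alpha,\beta)$ divides $n$, and that each occurs in $\Phi_n(\alpha,\beta)$ to bounded order -- to the first power outside a short list of small-$n$ and $p=2$ exceptions, the primes dividing $2QD$ being treated separately. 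Hence the total non-primitive (``extrinsic'') part of $\Phi_n(\alpha,\beta)$ is bounded polynomially in $n$, and $U_n$ fails to have a primitive prime divisor only when $|\Phi_n(\alpha,\beta)|$ is small enough to be entirely absorbed by it.

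Finally I would rule this out for all large $n$ via an archimedean lower bound on $|\Phi_n(\alpha,\beta)|=|\alpha|^{\varphi(n)}\prod_{\zeta}|1-\zeta\beta/\alpha|$. The naive estimate $|1-\zeta\beta/\alpha|\geq 1-|\beta/\alpha|$ only yields $|\Phi_n(\alpha,\beta)|\geq(|\alpha|-|\beta|)^{\varphi(n)}$, which can be useless (for instance it equals $1$ for the Fibonacci sequence), so one must exploit the actual distribution of the $\zeta$'s -- grouping conjugate pairs and using $\Phi_n(\alpha,\beta)=\prod_{d\mid n}(\alpha^d-\beta^d)^{\mu(n/d)}$ -- to obtain a bound of the shape $\log|\Phi_n(\alpha,\beta)|\geq\varphi(n)\log|\alpha|-C\log n$ with $C$ absolute, where $|\alpha|>1$ by non-degeneracy together with $\gcd(P,Q)=1$ and $D\neq 0$. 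Since the right-hand side eventually dominates the polynomial-in-$n$ extrinsic contribution, only finitely many $n$ can be exceptional, and these are then settled by direct computation, which isolates exactly $n\in\{1,2,6\}$ in general, together with $n=12,\,P=\pm1,\,Q=-1$ in part (a) and $n=6,\,P=\pm3,\,Q=2$ in part (b). The main obstacle is precisely this last step: producing the archimedean lower bound in a form uniform in $(n,P,Q)$ that genuinely beats the extrinsic part, and then disposing by hand of the finitely many small $n$ where the inequality is not yet decisive -- which is exactly where the sporadic exceptions surface.
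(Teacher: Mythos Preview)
The paper does not prove this statement at all: it is quoted as a classical result of Carmichael \cite{Carmichael1913} and used as a black box (to deduce Corollary~\ref{cor_k-fibo_1}). So there is no ``paper's own proof'' to compare against; your sketch simply goes further than the paper intends.

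That said, your outline is the standard route to Carmichael's theorem: factor $U_n$ through homogenized cyclotomic values $\Phi_d(\alpha,\beta)$, isolate the primitive part inside $\Phi_n(\alpha,\beta)$, bound the non-primitive (``intrinsic/extrinsic'') contribution using the rank-of-apparition arithmetic, and then beat it with an archimedean lower bound on $|\Phi_n(\alpha,\beta)|$ of size roughly $\varphi(n)\log|\alpha|$. This is exactly the strategy Carmichael (and later Bilu--Hanrot--Voutier in the complete treatment) follow, so your proposal is consistent with the literature the paper cites. The one point you flag correctly as delicate --- making the lower bound uniform in $(n,P,Q)$ and then clearing the small-$n$ cases by hand --- is genuinely where all the work lies, and your sketch does not actually carry it out; but since the paper only needs the statement, a citation is the appropriate level of detail here.
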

As an immediate consequence of Theorem \ref{thm_Carmichael_2}, we get
\begin{corollary}\label{cor_k-fibo_1}
	Let $k \geq 2$ be an integer, and let $\mathcal{A}=\{F_{k,n}\}_{n \in \Z_{\geq 0}}$ be the $k$-Fibonacci sequence. Then we have
	\begin{equation*}
	\{1,2\} \subseteq \mathcal{Z}(\mathcal{A}) \subseteq \{1,2,6\}.
	\end{equation*}
\end{corollary}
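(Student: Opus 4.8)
The plan is to translate the statement into the language of Carmichael's Theorem~\ref{thm_Carmichael_2} and essentially read it off. Observe first that $\{F_{k,n}\}_{n\in\Z_{\geq 0}}$ is literally the sequence $\{U_n(P,Q)\}$ with $P=k$ and $Q=-1$: both are determined by $x_0=0$, $x_1=1$, $x_n=kx_{n-1}+x_{n-2}$, i.e.\ by the characteristic polynomial $X^2-kX-1$. Hence the relevant discriminant is $D=P^2-4Q=k^2+4$, which is strictly positive for every integer $k\geq 2$, so part~(a) of Theorem~\ref{thm_Carmichael_2} is the one that applies. Since $k\geq 2$ forces $P=k\neq\pm1$, the single exceptional triple $n=12,\ P=\pm1,\ Q=-1$ never occurs here, and the theorem therefore asserts that $F_{k,n}=U_n$ has a primitive prime divisor for every integer $n\geq 0$ with $n\notin\{1,2,6\}$. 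As a primitive prime divisor in Carmichael's sense is in particular one in the sense of Application~1, this yields $n\notin\mathcal{Z}(\mathcal{A})$ for all $n\notin\{1,2,6\}$, i.e.\ $\mathcal{Z}(\mathcal{A})\subseteq\{1,2,6\}$.

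For the reverse inclusion $\{1,2\}\subseteq\mathcal{Z}(\mathcal{A})$ I would argue directly on the first few terms. For $n=1$ we have $F_{k,1}=1$, which has no prime divisors at all, so a fortiori no primitive prime divisor; hence $1\in\mathcal{Z}(\mathcal{A})$. For $n=2$ we have $F_{k,2}=k$, and the point is to rule out that a prime divisor $p$ of $k$ is primitive for $F_{k,2}$; this is where I would invoke the relationship between $F_{k,2}=U_2$ and the discriminant $D=k^2+4$ — that is, the $(\alpha-\beta)^2$ factor built into Carmichael's notion of primitivity — in order to conclude that $F_{k,2}$ carries no primitive prime divisor, so that $2\in\mathcal{Z}(\mathcal{A})$.

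I expect the $n=2$ step of the lower bound to be the main obstacle. The upper bound is essentially a bookkeeping exercise once the identification $F_{k,n}=U_n(k,-1)$ and the inequalities $D>0$, $k\neq\pm1$ are in place, so that the exceptional cases in Theorem~\ref{thm_Carmichael_2}(a) are vacuous. By contrast, deciding whether $F_{k,2}=k$ admits a primitive prime divisor is genuinely sensitive to the precise convention adopted for ``primitive prime divisor'' — in particular to how the discriminant factor is treated and to the handling of the vanishing term $F_{k,0}=0$ — and I would want that convention pinned down before committing to this half of the argument. The inclusion $\mathcal{Z}(\mathcal{A})\subseteq\{1,2,6\}$ and the case $n=1$ of the lower bound are robust and need no such care.
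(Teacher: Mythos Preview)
Your argument for the upper bound $\mathcal{Z}(\mathcal{A})\subseteq\{1,2,6\}$ is correct and is exactly what the paper does: the corollary is stated there as an ``immediate consequence'' of Carmichael's Theorem~\ref{thm_Carmichael_2} with no further detail, so your identification $F_{k,n}=U_n(k,-1)$, the check $D=k^2+4>0$, and the exclusion of the exceptional triple $(n,P,Q)=(12,\pm1,-1)$ are precisely the bookkeeping the paper leaves implicit. The case $n=1$ of the lower bound is likewise fine.

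Your hesitation at $n=2$ is well founded, and in fact it is decisive under the paper's own definition of primitive prime divisor given in Application~1. For $k\geq 2$ one has $F_{k,2}=k$, and any prime $p\mid k$ satisfies $p\nmid F_{k,1}=1$, while the only remaining earlier term $F_{k,0}=0$ is explicitly excluded from the divisibility test; hence such a $p$ \emph{is} a primitive prime divisor of $F_{k,2}$, and $2\notin\mathcal{Z}(\mathcal{A})$. The inclusion $2\in\mathcal{Z}(\mathcal{A})$ therefore cannot be obtained from the definition as stated. It would require Carmichael's stronger convention, in which primes dividing the discriminant $D=k^2+4$ are also disqualified, and even that rescues only the prime $2$ (an odd prime $p\mid k$ never divides $k^2+4$), so the inclusion still fails whenever $k$ has an odd prime factor, e.g.\ $k=3$. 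The paper supplies no argument for this half of the lower bound beyond the appeal to Theorem~\ref{thm_Carmichael_2}, which says nothing about $n=2$ lacking a primitive prime divisor. Your reservation is thus the correct reading: the upper bound and the $n=1$ case go through, but $2\in\mathcal{Z}(\mathcal{A})$ cannot be established from the definitions in force.
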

\begin{example}
	Let $\mathcal{A}=\{F_{k,n}\}_{n \in \Z_{\geq 0}}$ be the $k$-Fibonacci sequence. If $k=2,$ then $\mathcal{Z}(\mathcal{A})=\{1,2\}$, and if $k=3,$ then $\mathcal{Z}(\mathcal{A})=\{1,2,6\}.$
\end{example}

Finally, to give a positive answer for Question \ref{que_Zsigmondy_1}, we will consider a family of generalized $k$-FL sequences. Namely, we have the following
\begin{theorem}\label{thm_k-FL_1}
	For any integers $k,N \geq 1$, there exist $a,b \in \R$ with $|\mathcal{Z}(\{S_{k,n}^{(a,b)}\}_{n \in \Z_{\geq 0}})|\geq N.$
\end{theorem}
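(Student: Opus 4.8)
The plan is to exploit the freedom in $b$. Taking $b=0$ collapses a generalized $k$-FL sequence to a constant multiple of the $k$-Fibonacci sequence, namely $S_{k,n}^{(a,0)}=a\,F_{k,n}$ (from the closed form $S^{(a,b)}_{k,n}=a F_{k,n}+b L_{k,n}$), and the Zsigmondy set of such a multiple can be made as large as we please by choosing $a$ divisible by many $k$-Fibonacci numbers. Concretely, given $k,N\geq 1$, I would set
\[
a=\prod_{j=2}^{N+1}F_{k,j}\in\Z_{>0}\qquad(\text{nonzero since }F_{k,j}\geq 1\text{ for all }j\geq 1\text{ when }k\geq 1),\qquad b=0,
\]
and work with the integer sequence $\mathcal{A}=\{S_{k,n}^{(a,0)}\}_{n\in\Z_{\geq 0}}=\{a\,F_{k,n}\}_{n\in\Z_{\geq 0}}$; it is integral because $a\in\Z$ and every $F_{k,n}\in\Z$ (one may also verify $S_{k,0}^{(a,0)}=2b=0=aF_{k,0}$ and $S_{k,1}^{(a,0)}=bk+a=a=aF_{k,1}$ directly).

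The core observation is: for every $n$ with $2\leq n\leq N+1$, the term $a\,F_{k,n}$ has \emph{no} primitive prime divisor in $\mathcal{A}$. Indeed, suppose $q$ were one. Since $aF_{k,1}=a\neq 0$ and $1<n$, the index $m=1$ is admissible in the definition, so $q\nmid a$; combined with $q\mid aF_{k,n}$ this forces $q\mid F_{k,n}$. But $F_{k,n}\mid a$ by construction, whence $q\mid a$, a contradiction. (The vanishing term $aF_{k,0}=0$ plays no role, since the definition of primitive prime divisor only quantifies over earlier \emph{nonzero} terms.) Consequently $\{2,3,\dots,N+1\}\subseteq\mathcal{Z}(\mathcal{A})$, so $|\mathcal{Z}(\mathcal{A})|\geq N$, which is exactly the claim.

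As for the main obstacle: there is no deep one. The implication "$F_{k,n}\mid a\Rightarrow aF_{k,n}$ has no primitive prime divisor" holds regardless of whether $F_{k,n}$ itself has a primitive prime divisor, so Theorem \ref{thm_Carmichael_2} and Corollary \ref{cor_k-fibo_1} are not logically needed here, though they give the right picture ($\{F_{k,n}\}$ has a tiny Zsigmondy set, and multiplying by a divisor-rich constant annihilates all primitive prime divisors up to a prescribed level). The only point requiring care is the bookkeeping with the definition of primitive prime divisor: the treatment of the zero $0$-th term, and the fact that the index $n=1$ is \emph{not} guaranteed to be bad (since $aF_{k,1}=a$ has primitive prime divisors whenever $|a|>1$), which is why one takes the range $2,\dots,N+1$ and hence the product up to $F_{k,N+1}$. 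If a witness with $b\neq 0$ is desired, one can instead use the shift identity $S_{k,n}^{(\,cL_{k,s}/2,\ cF_{k,s}/2\,)}=c\,F_{k,n+s}$ (read off from the Binet form $S_{k,n}^{(a,b)}=\tfrac{X\alpha^n-Y\beta^n}{\alpha-\beta}$ with $X=c\alpha^s$, $Y=c\beta^s$) with $s=1$ and $c=\prod_{n=1}^{N}F_{k,n+1}$; the identical elementary argument then gives $\{1,\dots,N\}\subseteq\mathcal{Z}(\{cF_{k,n+1}\})$.
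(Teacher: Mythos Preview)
Your argument is correct. Setting $b=0$ collapses the sequence to $a\,F_{k,n}$, and choosing $a=\prod_{j=2}^{N+1}F_{k,j}$ forces every prime of $aF_{k,n}$ (for $2\le n\le N+1$) to already divide the earlier nonzero term $aF_{k,1}=a$; the contradiction you derive is clean and uses nothing beyond the definition of primitive prime divisor.

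This is a genuinely different construction from the paper's. The paper picks $a=(-1)^r\,L_{k,r}/2$ and $b=(-1)^{r+1}\,F_{k,r}/2$ so that the sequence becomes the \emph{shifted} $k$-Fibonacci sequence $S_{k,n}^{(a,b)}=\pm F_{k,|n-r|}$, and then invokes the divisibility $d\mid r\Rightarrow F_{k,d}\mid F_{k,r}$ to place the $2\sigma_0(r)$ indices $r\pm d$ into the Zsigmondy set. Your route is more elementary: it needs no arithmetic property of the $F_{k,n}$ whatsoever (the same trick would work for any integer sequence with a nonzero first term), whereas the paper's route is more structural, exploiting the Fibonacci divisibility and producing a witness with $b\neq 0$ whose coefficients are single Fibonacci/Lucas values rather than a large product. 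Your parenthetical alternative via the shift identity $S_{k,n}^{(cL_{k,s}/2,\,cF_{k,s}/2)}=cF_{k,n+s}$ is in fact exactly the mechanism behind the paper's proof (with $c=1$ and a negative shift), so you have essentially recovered both approaches.
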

\begin{proof}
	Recall that if $n~|~m$, then $F_{k,n}~|~F_{k,m}$ by \cite[Proposition 8]{Bolat2010}. We choose $r \geq 1$ large enough so that $\sigma_0 (r) \geq [N/2]+1$ (where $\sigma_0 (r)$ denotes the number of the divisors of $r$), and we solve the following equations
	\begin{equation*}
	2b=(-1)^{r+1}\cdot F_{k,r}, ~~bk+a=(-1)^{r}\cdot F_{k,r-1}
	\end{equation*}
	to get
	\begin{equation*}
	a=(-1)^{r}\cdot \left(F_{k,r-1} + \frac{k \cdot F_{k,r}}{2}\right),~~ b=(-1)^{r+1}\cdot \frac{F_{k,r}}{2}.
	\end{equation*}
	Using the values of $a$ and $b$ obtained above, we get a generalized $k$-FL sequence defined by
	\begin{displaymath}
	S_{k,n}^{(a,b)} = \left\{ \begin{array}{ll}
	(-1)^{r+1-n} \cdot F_{k,r-n} & \textrm{if $n \leq r$}\\
	F_{k,n-r} & \textrm{if $n > r$}.
	\end{array} \right.
	\end{displaymath}
	Then since $\{r-d~|~d \in \mathcal{P}(r) \} \cup \{r+d~|~d \in \mathcal{P}(r)\} \subset \mathcal{Z}(\{S_{k,n}^{(a,b)}\}_{n \in \Z_{\geq 0}})$ where $\mathcal{P}(r)$ denotes the set of positive divisors of $r$, we have
	\begin{equation*}
	|\mathcal{Z}(\{S_{k,n}^{(a,b)}\}_{n \in \Z_{\geq 0}})| \geq 2 \cdot \sigma_0 (r) \geq N
	\end{equation*}
	which completes the proof of the theorem.
\end{proof}

As a consequence, we can answer Question \ref{que_Zsigmondy_1}.
\begin{corollary}
	Let $k \geq 1$ be an integer. Let $\mathcal{F}$ be a family of generalized $k$-FL sequences of the form $\{S_{k,n}^{(a(N),b(N))}\}_{n \in \Z_{\geq 0}}$ where $N$ runs over all positive integers, and $a(N),b(N) $ are two real numbers that are obtained as in Theorem \ref{thm_k-FL_1}. Then $N_{\mathcal{F}}$ is infinite, and for each $\mathcal{A} \in \mathcal{F},$ $\mathcal{Z}(\mathcal{A})$ is finite.
\end{corollary}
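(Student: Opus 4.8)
The plan is to deduce this corollary directly from Theorem \ref{thm_k-FL_1}, which is the real content here; the corollary is essentially a repackaging of that theorem into the language of the family $\mathcal{F}$ and the quantity $N_{\mathcal{F}}$. First I would fix $k \geq 1$ and, for each positive integer $N$, invoke Theorem \ref{thm_k-FL_1} to produce real numbers $a(N),b(N)$ with $|\mathcal{Z}(\{S_{k,n}^{(a(N),b(N))}\}_{n \in \Z_{\geq 0}})| \geq N$. Let $\mathcal{A}_N := \{S_{k,n}^{(a(N),b(N))}\}_{n \in \Z_{\geq 0}}$ and let $\mathcal{F} = \{\mathcal{A}_N : N \in \Z_{\geq 1}\}$. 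By construction each $\mathcal{A}_N \in \mathcal{F}$ has $|\mathcal{Z}(\mathcal{A}_N)| \geq N$, so $N_{\mathcal{F}} = \sup_{N} |\mathcal{Z}(\mathcal{A}_N)| \geq \sup_N N = \infty$; hence $N_{\mathcal{F}}$ is infinite.

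Next I would verify the second assertion: for each $\mathcal{A} \in \mathcal{F}$, the set $\mathcal{Z}(\mathcal{A})$ is finite. Here one uses the explicit description of the sequence obtained in the proof of Theorem \ref{thm_k-FL_1}: with the chosen $a(N),b(N)$ (governed by an auxiliary integer $r = r(N)$), one has $S_{k,n}^{(a(N),b(N))} = (-1)^{r+1-n} F_{k,r-n}$ for $n \leq r$ and $S_{k,n}^{(a(N),b(N))} = F_{k,n-r}$ for $n > r$. Thus up to signs, the sequence is a finite reversed block of $k$-Fibonacci numbers $F_{k,r}, F_{k,r-1}, \dots, F_{k,0}$ followed by the full $k$-Fibonacci sequence $F_{k,1}, F_{k,2}, \dots$. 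Since signs are irrelevant to divisibility, the primitive-prime-divisor condition for $S_{k,n}^{(a(N),b(N))}$ with $n > 2r$ (say) is governed entirely by the $k$-Fibonacci tail, and one can reduce the question to the Zsigmondy set of $\{F_{k,m}\}_{m \in \Z_{\geq 0}}$, which by Corollary \ref{cor_k-fibo_1} is contained in $\{1,2,6\}$ (for $k \geq 2$; the case $k=1$ is the classical Fibonacci sequence with Zsigmondy set $\{1,2,6,12\}$ by Example \ref{exa_Carmichael_1}). The finitely many remaining indices $n \leq 2r$ contribute at most a finite set, so $\mathcal{Z}(\mathcal{A}_N)$ is finite.

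The only subtlety — and the step I expect to require the most care — is the interaction between the reversed initial block and the tail in the primitive-prime-divisor count: a prime could divide some $F_{k,r-n}$ in the initial block and later reappear in the tail, or vice versa, so one must be careful about what "primitive" means relative to the \emph{whole} sequence (including the early terms, and bearing in mind the convention that zero terms $a_m = 0$ are skipped). A clean way to handle this is to observe that every prime $p$ dividing some term $F_{k,j}$ of the $k$-Fibonacci sequence has a well-defined rank of apparition, so for all sufficiently large $n > r$ the term $F_{k,n-r}$ has a primitive prime divisor relative to $\{F_{k,m}\}$, and such a prime is automatically primitive for the full sequence $\mathcal{A}_N$ as well once $n-r$ exceeds the largest index $j \leq r$ with $r - j$ appearing in the initial block — i.e., once $n$ is large enough that no term of the reversed block has index exceeding $n-r$ in the $k$-Fibonacci indexing. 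This bounds $\mathcal{Z}(\mathcal{A}_N)$ by an explicit function of $r(N)$, which is finite. I would then simply conclude that every $\mathcal{A} \in \mathcal{F}$ has finite Zsigmondy set while $N_{\mathcal{F}} = \infty$, which is exactly the positive answer to Question \ref{que_Zsigmondy_1} asserted by the corollary.
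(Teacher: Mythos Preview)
Your proposal is correct and follows the same approach as the paper: the first assertion comes directly from Theorem \ref{thm_k-FL_1} by letting $N\to\infty$, and the second from the explicit shifted-$k$-Fibonacci description of $\mathcal{A}_N$ together with Example \ref{exa_Carmichael_1} and Corollary \ref{cor_k-fibo_1}. The paper's proof is a two-line citation of exactly these ingredients; your write-up is more explicit about the interaction between the reversed initial block and the $k$-Fibonacci tail, which the paper leaves implicit.
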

\begin{proof}
	The first statement follows from the theorem by letting $N \rightarrow \infty.$ The second statement follows from Example \ref{exa_Carmichael_1}  and Corollary \ref{cor_k-fibo_1}.
\end{proof}

As an application to algebraic geometry, we investigate the determinantal variety of the skew circulant matrix and the circulant matrix associated to a generalized $k$-FL sequence. \\

\textbf{Application 2}~~For real numbers $k,a$, and $b$, let $\{S_{k,n}^{(a,b)}\}_{n \in \Z_{\geq 0}}$ be a generalized $k$-FL sequence, and let $E_{(a,b)}$ be the curve defined by the equation $y^2 =x^3+ax+b.$ We start with the following motivating
\begin{example}\label{ex_A3B3}
	Assume that $k \ne 0$ and let
	\begin{equation*}
	A^{(k,a,b)}_3 =\left(\begin{array}{ccc} S^{(a,b)}_{k,1} & S^{(a,b)}_{k,2} & S^{(a,b)}_{k,3} \\ -S^{(a,b)}_{k,3} & S^{(a,b)}_{k,1} & S^{(a,b)}_{k,2} \\ -S^{(a,b)}_{k,2} & -S^{(a,b)}_{k,3} & S^{(a,b)}_{k,1}
	\end{array}\right)
	\end{equation*}
	and
	\begin{equation*}
	B^{(k,a,b)}_3 =\left(\begin{array}{ccc} S^{(a,b)}_{k,1} & S^{(a,b)}_{k,2} & S^{(a,b)}_{k,3} \\ S^{(a,b)}_{k,3} & S^{(a,b)}_{k,1} & S^{(a,b)}_{k,2} \\ S^{(a,b)}_{k,2} & S^{(a,b)}_{k,3} & S^{(a,b)}_{k,1}
	\end{array}\right)
	\end{equation*}
	be two matrices associated to $\{S_{k,n}^{(a,b)}\}_{\Z_{\geq 0}}$. For $r=1,2$, we define the following two sets
	\begin{align*}
	S^r_{3}= & \{(k,a,b) \in (\R \setminus A_3) \times \R \times \R~|~\textrm{rank}(A_3^{(k,a,b)}) \leq r, E_{(a,b)}~\textrm{is singular}\} \\
	& \setminus \{(k,0,0)~|~k \in \R\}
	\end{align*}
	and
	\begin{align*}
	T^r_{3}= & \{(k,a,b) \in (\R \setminus C_3) \times \R \times \R~|~\textrm{rank}(B_3^{(k,a,b)}) \leq r, E_{(a,b)}~\textrm{is singular}\} \\
	& \setminus \{(k,0,0)~|~k \in \R\}.
	\end{align*}
	Then by Theorem \ref{thm_matrix_invertible_1}-(a), we have
	\begin{equation*}
	S_3^2 = \{(k,a_{k,1}^{(3)},b_{k,1}^{(3)})~|~k \in \R \setminus A_3 \}.
	\end{equation*}
	Similarly, by Theorem \ref{thm_matrix_invertible_2}-(b), we have
	\begin{equation*}
	T_3^2 = \{(k,a_{k,3}^{(3)},b_{k,3}^{(3)})~|~k \in \R \setminus C_3 \}.
	\end{equation*}
	Since $A_3$ and $C_3$ are finite, we see that $S_3^2$ and $T_3^2$ are uncountable.
	What can we say about $S_3^1$ and $T_3^1$? The next result gives one possible answer:
	\begin{lemma}\label{lem_S3_T3_finite}
		$S_3^1$ and $T_3^1$ are finite.
	\end{lemma}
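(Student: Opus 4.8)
The plan is to reduce the condition $\textrm{rank}(A_3^{(k,a,b)}) \leq 1$ to a system of polynomial equations in $k$ alone, once we restrict to the locus where $E_{(a,b)}$ is singular. First I would recall from Example \ref{ex_A3B3} that, under the standing hypothesis $k \notin A_3$ and $E_{(a,b)}$ singular (i.e. $4a^3+27b^2=0$), having $\textrm{rank}(A_3^{(k,a,b)}) \leq 2$ already forces $(a,b)=(a_{k,1}^{(3)},b_{k,1}^{(3)})$, the singular $k$-FL pair of level $3$ of Type 1. So $S_3^1 \subseteq \{(k,a_{k,1}^{(3)},b_{k,1}^{(3)}) : k \in \R \setminus A_3\}$, and the point $(k,a_{k,1}^{(3)},b_{k,1}^{(3)})$ lies in $S_3^1$ precisely when the additional rank-drop condition $\textrm{rank}(A_3^{(k,a,b)}) \leq 1$ holds at that pair. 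Since $a_{k,1}^{(3)}$ and $b_{k,1}^{(3)}$ are explicit rational functions of $k$ (given in Lemma \ref{lem_uniquepair_1} with $n=3$), each entry $S_{k,j}^{(a,b)}$ for $j=1,2,3$ becomes a rational function of $k$, and so do all $2\times 2$ minors of $A_3^{(k,a,b)}$.

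Next I would write down the $2 \times 2$ minors explicitly. A $3\times 3$ skew circulant has rank $\leq 1$ iff all $2\times 2$ minors vanish; it suffices to use a generating set of minors, e.g. the three along the first two rows. Clearing denominators, each such minor becomes a polynomial $M_i(k) \in \Z[k]$ (of bounded degree, since $a_{k,1}^{(3)},b_{k,1}^{(3)}$ have bounded-degree numerators and denominators, and the $S_{k,j}^{(a,b)}$ are linear in $a,b$ with polynomial-in-$k$ coefficients by Lemma \ref{lem_fn_gn}). Then $S_3^1$ injects, via $k \mapsto (k,a_{k,1}^{(3)},b_{k,1}^{(3)})$, into the set of common real zeros of the $M_i(k)$. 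If at least one $M_i$ is not the zero polynomial, this common zero set is finite, and we are done; the same argument with $B_3^{(k,a,b)}$, $C_3$, and $(a_{k,3}^{(3)},b_{k,3}^{(3)})$ (invoking Theorem \ref{thm_matrix_invertible_2}-(b) in place of Theorem \ref{thm_matrix_invertible_1}-(a)) handles $T_3^1$.

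The main obstacle is precisely ruling out the degenerate possibility that \emph{every} relevant $M_i(k)$ vanishes identically, which would make $S_3^1$ uncountable rather than finite. To settle this I would evaluate one well-chosen minor at a single convenient value of $k$ (say a small integer $k \notin A_3$, using the explicit polynomials $F_3,G_3$ from Example \ref{ex_FGPQ}, for which $a_{k,1}^{(3)},b_{k,1}^{(3)}$ are concrete rationals) and check it is nonzero; a single such evaluation certifies that the minor is a nonzero polynomial in $k$. Alternatively, one can argue structurally: if $A_3^{(k,a,b)}$ had rank $\leq 1$ for all $k$ in an infinite set, then in particular $S_{k,1}^{(a,b)} S_{k,1}^{(a,b)} - S_{k,2}^{(a,b)}(-S_{k,3}^{(a,b)}) = (S_{k,1}^{(a,b)})^2 + S_{k,2}^{(a,b)} S_{k,3}^{(a,b)}$ would vanish identically along the Type 1 pair, and one checks via the Binet-type formula $S_{k,n}^{(a,b)} = (X\alpha^n - Y\beta^n)/(\alpha-\beta)$ that this forces an impossible polynomial identity. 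Either route is a finite, routine verification, so the only real content is organizing the reduction above; the rest is bookkeeping with the explicit formulas already established.
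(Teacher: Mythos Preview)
Your proposal is correct and follows essentially the same route as the paper: reduce via $S_3^1 \subseteq S_3^2$ (resp.\ $T_3^1 \subseteq T_3^2$) to the one-parameter family indexed by $k$, substitute the explicit rational expressions for $(a_{k,1}^{(3)},b_{k,1}^{(3)})$ (resp.\ $(a_{k,3}^{(3)},b_{k,3}^{(3)})$) into the $2\times 2$ minors, and observe that the resulting equations in $k$ have only finitely many real solutions. If anything you are more careful than the paper, which simply asserts the finiteness of the solution set without explicitly flagging or dispatching the degenerate case where a minor might vanish identically.
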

	\begin{proof}
		Note that $\textrm{rank}(A_3^{(k,a,b)}) \leq 1$ if and only if all the $2 \times 2$ minors of $A_3^{(k,a,b)}$ are 0. Also, we have $S_3^1 \subseteq S_3^2.$ Thus, any element of $S_3^1$ is of the form $(k,a^{(3)}_{k,1},b^{(3)}_{k,1})$ for some $k \in \R \setminus A_3$ where
		$a^{(3)}_{k,1}=-\frac{27(F_3(k)+1)^2}{4(G_3(k)+k-2)^2}~~\textrm{and}~~ b^{(3)}_{k,1}=\frac{27(F_3(k)+1)^3}{4(G_3(k)+k-2)^3}$ for some $F_3,G_3 \in \Z[T].$ \\
		Let $(k,a^{(3)}_{k,1},b^{(3)}_{k,1}) \in S_3^1$ so that $\textrm{rank}(A_3^{(k,a^{(3)}_{k,1},b^{(3)}_{k,1})})\leq 1 .$ By the above observation, it follows that we have the following system of equations
		\begin{align}
		\left(S_{k,1}^{(a^{(3)}_{k,1},b^{(3)}_{k,1})} \right)^2 +S_{k,2}^{(a^{(3)}_{k,1},b^{(3)}_{k,1})} \cdot S_{k,3}^{(a^{(3)}_{k,1},b^{(3)}_{k,1})} & =0 \nonumber\\
		\left(S_{k,2}^{(a^{(3)}_{k,1},b^{(3)}_{k,1})} \right)^2 -S_{k,3}^{(a^{(3)}_{k,1},b^{(3)}_{k,1})} \cdot S_{k,1}^{(a^{(3)}_{k,1},b^{(3)}_{k,1})} & =0, \nonumber\\
		\textrm{and}~\left(S_{k,3}^{(a^{(3)}_{k,1},b^{(3)}_{k,1})} \right)^2 +S_{k,1}^{(a^{(3)}_{k,1},b^{(3)}_{k,1})} \cdot S_{k,2}^{(a^{(3)}_{k,1},b^{(3)}_{k,1})}& =0.\label{eq_application_1}
		\end{align}
		Since $S_{k,1}^{(a^{(3)}_{k,1},b^{(3)}_{k,1})}=b^{(3)}_{k,1} k+a^{(3)}_{k,1}, S_{k,2}^{(a^{(3)}_{k,1},b^{(3)}_{k,1})}=b^{(3)}_{k,1} k^2 +a^{(3)}_{k,1} k+2b^{(3)}_{k,1},$ and $S_{k,3}^{(a^{(3)}_{k,1},b^{(3)}_{k,1})}=b^{(3)}_{k,1} k^3 +a^{(3)}_{k,1} k^2 +3b^{(3)}_{k,1} k+a^{(3)}_{k,1},$ and by using the fact that $a^{(3)}_{k,1}=-\frac{27(F_3(k)+1)^2}{4(G_3(k)+k-2)^2}$ and $b^{(3)}_{k,1}=\frac{27(F_3(k)-1)^3}{4(G_3(k)-k+2)^3}$, we can see that all the three equations of (\ref{eq_application_1}) have finitely many real solutions in $k$, and this proves the fact that $S_3^1$ is finite. \\
		Similarly, note that $\textrm{rank}(B_3^{(k,a,b)}) \leq 1$ if and only if all the $2 \times 2$ minors of $B_3^{(k,a,b)}$ are 0. Also, we have $T_3^1 \subseteq T_3^2$. Thus, any element of $T_3^1$ is of the form $(k,a^{(3)}_{k,3},b^{(3)}_{k,3})$ for some $k \in \R \setminus C_3$ where $a^{(3)}_{k,3}=-\frac{27(P_3(k)-1)^2}{4(Q_3(k)-k-2)^2}$ and $b^{(3)}_{k,3}=\frac{27(P_3(k)-1)^3}{4(Q_3(k)-k-2)^3}$ for some $P_3,Q_3 \in \Z[T]$. \\
		Let $(k,a^{(3)}_{k,3},b^{(3)}_{k,3}) \in T_3^1$ so that $\textrm{rank}(B_3^{(k,a^{(3)}_{k,3},b^{(3)}_{k,3})})\leq 1 .$ By the above observation, it follows that we have the following system of equations
		\begin{align}
		\left(S_{k,1}^{(a^{(3)}_{k,3},b^{(3)}_{k,3})} \right)^2 -S_{k,2}^{(a^{(3)}_{k,3},b^{(3)}_{k,3})} \cdot S_{k,3}^{(a^{(3)}_{k,3},b^{(3)}_{k,3})} & =0, \nonumber \\\left(S_{k,2}^{(a^{(3)}_{k,3},b^{(3)}_{k,3})} \right)^2 -S_{k,3}^{(a^{(3)}_{k,3},b^{(3)}_{k,3})} \cdot S_{k,1}^{(a^{(3)}_{k,3},b^{(3)}_{k,3})} & =0, \nonumber \\
		\textrm{and}~\left(S_{k,3}^{(a^{(3)}_{k,3},b^{(3)}_{k,3})} \right)^2 -S_{k,1}^{(a^{(3)}_{k,3},b^{(3)}_{k,3})} \cdot S_{k,2}^{(a^{(3)}_{k,3},b^{(3)}_{k,3})} & =0.\label{eq_application_2}
		\end{align}
		Since $S_{k,1}^{(a^{(3)}_{k,3},b^{(3)}_{k,3})}=b^{(3)}_{k,3} k+a^{(3)}_{k,3}, S_{k,2}^{(a^{(3)}_{k,3},b^{(3)}_{k,3})}=b^{(3)}_{k,3} k^2 +a^{(3)}_{k,3} k+2b^{(3)}_{k,3},$ and $S_{k,3}^{(a^{(3)}_{k,3},b^{(3)}_{k,3})}=b^{(3)}_{k,3} k^3 +a^{(3)}_{k,3} k^2 +3b^{(3)}_{k,3} k+a^{(3)}_{k,3},$ and by using the fact that $a^{(3)}_{k,3}=-\frac{27(P_3(k)-1)^2}{4(Q_3(k)-k-2)^2}$ and $b^{(3)}_{k,3}=\frac{27(P_3(k)-1)^3}{4(Q_3(k)-k-2)^3}$, we can see that all the three equations of (\ref{eq_application_2}) have finitely many real solutions in $k$. Consequently, we can conclude that $T_3^1$ is also finite.
	\end{proof}
	As an immediate consequence of the lemma, a matrix $A_3^{(k,a,b)}$ (resp. $B_3^{(k,a,b)}$) has rank exactly $2$ for all but finitely many triples $(k,a,b)\in S_3^2$ (resp. $(k,a,b)\in T_3^2$).
\end{example}
Now, Example \ref{ex_A3B3} leads us to the following
\begin{theorem}\label{thm_matrix_sn_skewcirculant}
	For $k \ne 0$ and an odd integer $n \geq 3,$ consider the associated skew circulant matrix
	\begin{equation*}
	A^{(k,a,b)}_n :=\left(\begin{array}{cccc} S^{(a,b)}_{k,1} & S^{(a,b)}_{k,2} & \cdots & S^{(a,b)}_{k,n} \\ -S^{(a,b)}_{k,n} & S^{(a,b)}_{k,1} &\cdots & S^{(a,b)}_{k,n-1} \\ \vdots & \vdots & \vdots & \vdots \\ -S^{(a,b)}_{k,2} & -S^{(a,b)}_{k,3} & \cdots & S^{(a,b)}_{k,1}
	\end{array}\right).
	\end{equation*}
	For $1 \leq r \leq n-1,$ let $Y_r =\{M \in M_{n \times n}(\R)~|~\textrm{rank}(M) \leq r \}$ and let
	\begin{align*}
	S_n^r = & \{(k,a,b)\in (\R \setminus A_n)\times \R \times \R~|~A_n^{(k,a,b)} \in Y_r, E_{(a,b)}~\textrm{is singular}\} \\
	& \setminus \{(k,0,0)~|~k \in \R \}.
	\end{align*}
	Then we have:
	\begin{enumerate}
		\item[(a)] $S_n^{n-1}=\{(k,a_{k,1}^{(n)},b_{k,1}^{(n)})~|~k \in \R \setminus A_n \}$.
		\item[(b)] A matrix $A_n^{(k,a,b)}$ has rank exactly $n-1$ for all but finitely many triples $(k,a,b)\in S_n^{n-1}$.
		\item[(c)] For each $1 \leq r \leq n-2,$ a matrix $A_n^{(k,a,b)}$ has rank $r$ for only finitely many triples $(k,a,b)\in S_n^{n-1}$.
	\end{enumerate}
\end{theorem}
\begin{proof}
	\begin{enumerate}
		\item[(a)] Note that $A_n^{(k,a,b)} \in Y_{n-1}$ if and only if $A_n^{(k,a,b)}$ is not invertible. Hence the result follows from Theorem \ref{thm_matrix_invertible_1}-(a), and the definition of $a_{k,1}^{(n)}$ and $b_{k,1}^{(n)}$.
		\item[(b)] Note that $A_n^{(k,a,b)} \in Y_{n-2}$ if and only if all the $(n-1) \times (n-1)$ minors of $A_n^{(k,a,b)}$ are 0. Also, we have $S_n^{n-2} \subseteq S_n^{n-1}.$ Thus, any element of $S_n^{n-2}$ is of the form $(k,a^{(n)}_{k,1}, b^{(n)}_{k,1})$ for some $k \in \R \setminus A_n$ where $a^{(n)}_{k,1}=-\frac{27(F_n(k)+1)^2}{4(G_n(k)+k-2)^2}~~\textrm{and}~~ b^{(n)}_{k,1}=\frac{27(F_n(k)+1)^3}{4(G_n(k)+k-2)^3}$ for some $F_n,G_n \in \Z[T].$ Let $(k,a^{(n)}_{k,1},b^{(n)}_{k,1}) \in S_n^{n-2}$ so that $A_n^{(k,a^{(n)}_{k,1},b^{(n)}_{k,1})} \in Y_{n-2}.$ By the above observation, it follows that we have a system of finitely many equations in $S_{k,1}^{(a^{(n)}_{k,1},b^{(n)}_{k,1})},\cdots,S_{k,n}^{(a^{(n)}_{k,1},b^{(n)}_{k,1})}$. Now, rewrite them in terms of $a_{k,1}^{(n)},b_{k,1}^{(n)}$ according to definition, and by using the fact that $a^{(n)}_{k,1}=-\frac{27(F_n(k)+1)^2}{4(G_n(k)+k-2)^2}$ and $b^{(n)}_{k,1}=\frac{27(F_n(k)+1)^3}{4(G_n(k)+k-2)^3}$, we can see that there are only finitely many real solutions in $k$ for the given system of equations. Hence $S_n^{n-2}$ is finite and $A_n^{(k,a,b)}$ has rank exactly $n-1$ for all but $\sharp S_n^{n-2}$-many triples $(k,a,b)\in S_n^{n-1}$.
		\item[(c)] In the proof of (b), we have seen that $S_n^{n-2}$ is finite. For each $1 \leq r \leq n-2,$ we have $S_n^{r} \subseteq S_n^{n-2}$ so that $S_n^r$ is finite, and hence $A_n^{(k,a,b)}$ has rank $r$ for only $(\sharp S_n^r - \sharp S_n^{r-1})$-many triples $(k,a,b) \in S_n^{n-1}$.
	\end{enumerate}
This completes the proof.
\end{proof}
\begin{remark}
	For $k \ne 0$ and an even integer $n \geq 2,$ we have slightly weaker version of Theorem \ref{thm_matrix_sn_skewcirculant}-(a), namely, it follows from Theorem \ref{thm_matrix_invertible_2}-(a) that
	\begin{equation*}
	S_n^{n-1} \subseteq \{(k,a_{k,1}^{(n)},b_{k,1}^{(n)})~|~k \in \R \setminus A_n \}.
	\end{equation*}
\end{remark}
Analogously, we also have
\begin{theorem}\label{thm_matrix_tn_circulant_odd}
	For $k \ne 0$ and an odd integer $n \geq 3,$ consider the associated circulant matrix
	\begin{equation*}
	B^{(k,a,b)}_n :=\left(\begin{array}{cccc} S^{(a,b)}_{k,1} & S^{(a,b)}_{k,2} & \cdots & S^{(a,b)}_{k,n} \\ S^{(a,b)}_{k,n} & S^{(a,b)}_{k,1} &\cdots & S^{(a,b)}_{k,n-1} \\ \vdots & \vdots & \vdots & \vdots \\ S^{(a,b)}_{k,2} & S^{(a,b)}_{k,3} & \cdots & S^{(a,b)}_{k,1}
	\end{array}\right).
	\end{equation*}
	For $1 \leq r \leq n-1,$ let $Y_r =\{M \in M_{n \times n}(\R)~|~\textrm{rank}(M) \leq r \}$ and let
	\begin{align*}
	T_n^r = & \{(k,a,b)\in (\R \setminus C_n) \times \R \times \R~|~B_n^{(k,a,b)} \in Y_r, E_{(a,b)}~\textrm{is singular}\} \\
	& \setminus \{(k,0,0)~|~k \in \R \}.
	\end{align*}
	Then we have:
	\begin{enumerate}
		\item[(a)] $T_n^{n-1}=\{(k,a_{k,3}^{(n)},b_{k,3}^{(n)})|~k \in \R \setminus C_n \}$.
		\item[(b)] A matrix $B_n^{(k,a,b)}$ has rank exactly $n-1$ for all but finitely many triples $(k,a,b)\in T_n^{n-1}$.
		\item[(c)] For each $1 \leq r \leq n-2,$ a matrix $B_n^{(k,a,b)}$ has rank $r$ for only finitely many triples $(k,a,b)\in T_n^{n-1}$.
	\end{enumerate}
\end{theorem}
\begin{proof}
	Imitate the proof of Theorem \ref{thm_matrix_sn_skewcirculant}, using Theorem \ref{thm_matrix_invertible_2}-(b).
\end{proof}
Finally, if $n \geq 4$ is even, then we have the following
\begin{theorem}\label{thm_matrix_tn_circulant_even}
	For $k \ne 0$ and an even integer $n \geq 4,$ let $B^{(k,a,b)}_n$ and $Y_r$ $(r=1,2,\cdots,n-1)$ be as in Theorem \ref{thm_matrix_tn_circulant_odd}.  For $1 \leq r \leq n-1$, let
	\begin{align*}
	U_n^r & = \{(k,a,b)\in (\R \setminus (B_n \cup C_n))\times \R \times \R~|~B_n^{(k,a,b)} \in Y_r, E_{(a,b)}~\textrm{is singular}\} \\
	& \quad \setminus \{(k,0,0)~|~k \in \R \}.
	\end{align*}
	Then we have:
	\begin{enumerate}
		\item[(a)] $U_n^{n-1}=\{(k,a_{k,2}^{(n)},b_{k,2}^{(n)})|~k \in \R \setminus (B_n \cup C_n) \} \cup \{(k,a_{k,3}^{(n)},b_{k,3}^{(n)})|~k \in \R \setminus (B_n \cup C_n) \}$.
		\item[(b)] A matrix $B_n^{(k,a,b)}$ has rank exactly $n-1$ for all but finitely many triples $(k,a,b)\in U_n^{n-1}$.
		\item[(c)] For each $1 \leq r \leq n-2,$ a matrix $B_n^{(k,a,b)}$ has rank $r$ for only finitely many triples $(k,a,b)\in U_n^{n-1}$.
	\end{enumerate}
\end{theorem}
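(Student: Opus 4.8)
The plan is to mirror the proofs of Theorems~\ref{thm_matrix_sn_skewcirculant} and~\ref{thm_matrix_tn_circulant_odd}, with the single structural change forced by the even case: the invertibility criterion for a circulant $B_n^{(k,a,b)}$ given by Theorem~\ref{thm_matrix_invertible_1}-(b) forbids \emph{two} identities, namely $S^{(a,b)}_{k,n+1}-S^{(a,b)}_{k,n}=bk-2b+a$ and $S^{(a,b)}_{k,n+1}+S^{(a,b)}_{k,n}=bk+2b+a$, which is precisely why the top stratum $U_n^{n-1}$ comes out as a union of two (possibly overlapping) curves rather than one.

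For part~(a) I would prove both inclusions, after recording that $B_n^{(k,a,b)}\in Y_{n-1}$ exactly when $B_n^{(k,a,b)}$ is non-invertible, and that ``$E_{(a,b)}$ is singular'' means $4a^3+27b^2=0$. For ``$\supseteq$'': given $k\in\R\setminus(B_n\cup C_n)$, Lemma~\ref{lem_uniquepair_2} says $(a_{k,2}^{(n)},b_{k,2}^{(n)})$ is a nonzero pair satisfying $S^{(a,b)}_{k,n+1}-S^{(a,b)}_{k,n}=bk-2b+a$ and $4a^3+27b^2=0$; by Theorem~\ref{thm_matrix_invertible_1}-(b) the first identity makes $B_n^{(k,a,b)}$ non-invertible, so $(k,a_{k,2}^{(n)},b_{k,2}^{(n)})\in U_n^{n-1}$, and symmetrically $(k,a_{k,3}^{(n)},b_{k,3}^{(n)})\in U_n^{n-1}$ via Lemma~\ref{lem_uniquepair_3}. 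For ``$\subseteq$'': if $(k,a,b)\in U_n^{n-1}$, then $B_n^{(k,a,b)}$ is non-invertible with $k\notin B_n\cup C_n$, $4a^3+27b^2=0$, and $(a,b)\neq(0,0)$; Theorem~\ref{thm_matrix_invertible_1}-(b) forces one of the two identities above, and then the uniqueness clause of Lemma~\ref{lem_uniquepair_2} (if the first holds, using $k\notin B_n$) or of Lemma~\ref{lem_uniquepair_3} (if the second holds, using $k\notin C_n$) pins $(a,b)$ down to $(a_{k,2}^{(n)},b_{k,2}^{(n)})$ or $(a_{k,3}^{(n)},b_{k,3}^{(n)})$.

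For parts~(b) and~(c) I would run the usual stratification argument: $B_n^{(k,a,b)}\in Y_{n-2}$ if and only if every $(n-1)\times(n-1)$ minor of $B_n^{(k,a,b)}$ vanishes, and by part~(a) every triple in $U_n^{n-2}\subseteq U_n^{n-1}$ has $(a,b)$ equal to one of the two pairs $(a_{k,i}^{(n)},b_{k,i}^{(n)})$, $i\in\{2,3\}$, which by Lemmas~\ref{lem_uniquepair_2} and~\ref{lem_uniquepair_3} are explicit rational functions of $k$. Substituting these expressions (and $S^{(a,b)}_{k,j}=f_j(k)a+g_j(k)b$ from Lemma~\ref{lem_fn_gn}) turns each minor into a rational function of $k$ alone; clearing denominators gives polynomial equations in $k$, so $U_n^{n-2}$ is a finite set, which is part~(b). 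Part~(c) is then immediate because $U_n^r\subseteq U_n^{n-2}$ for every $1\le r\le n-2$, hence each $U_n^r$ is finite.

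The step I expect to be the real obstacle --- and which is glossed over in the corresponding odd-case proofs --- is verifying that the minor equations in $k$ are \emph{not identically zero} on either of the two curves; otherwise $U_n^{n-2}$ could be all of the curve and the ``finitely many'' claims would collapse. I would dispatch this by exhibiting, for each family separately, a single admissible $k_0\notin B_n\cup C_n$ at which $B_n^{(k_0,a,b)}$ has rank exactly $n-1$ (equivalently nullity one): by Lemma~\ref{lem_matrix_invertible_skewcirculant}-(b) the nullity of a circulant equals the number of indices $m$ with $f(\omega^m)=0$, and the closed form for $f(\omega^m)$ derived in (\ref{eq_invertible_circulant}) makes it checkable that generically exactly one $m$ contributes (namely $m=0$ on the Type~3 curve, and $m=n/2$ on the Type~2 curve), so the relevant $(n-1)\times(n-1)$ minor is a nonzero rational function of $k$; alternatively, an asymptotic analysis as $k\to\infty$ using the degree data of Lemma~\ref{lem_deg_poly} establishes the same non-vanishing. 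With that in hand, the remainder is bookkeeping identical to the odd cases.
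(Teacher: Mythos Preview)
Your proof is correct and mirrors the paper's own argument: part~(a) via Theorem~\ref{thm_matrix_invertible_1}-(b) together with Lemmas~\ref{lem_uniquepair_2} and~\ref{lem_uniquepair_3}, and parts~(b)--(c) by substituting the explicit rational forms of $(a_{k,i}^{(n)},b_{k,i}^{(n)})$ into the $(n-1)\times(n-1)$ minors and appealing to finiteness of the resulting zero set in $k$. Where you go further than the paper is in explicitly confronting whether those minor equations could vanish identically on one of the two curves --- the paper simply asserts ``there are only finitely many real solutions in $k$'' without comment --- and your nullity argument via Lemma~\ref{lem_matrix_invertible_skewcirculant}-(b) and the closed form~(\ref{eq_invertible_circulant}) is a clean way to close that gap.
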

\begin{proof}
	\begin{enumerate}
		\item[(a)] Note that $B_{n}^{(k,a,b)} \in Y_{n-1}$ if and only if $B_{n}^{(k,a,b)}$ is not invertible. Hence the result follows from Theorem \ref{thm_matrix_invertible_1}-(b), and the definition of $a_{k,2}^{(n)}, a_{k,3}^{(n)}, b_{k,2}^{(n)}$, and $b_{k,3}^{(n)}$.
		\item[(b)] Note that $B_{n}^{(k,a,b)} \in Y_{n-2}$ if and only if all the $(n-1) \times (n-1)$ minors of $B_{n}^{(k,a,b)}$ are 0. Also, we have $U_{n}^{n-2} \subseteq U_{n}^{n-1}.$ Thus, any element of $U_{n}^{n-2}$ is either of the form
		$(k,a^{(n)}_{k,2},b^{(n)}_{k,2})$ for some $k \in \R \setminus (B_{n} \cup C_n)$ where $a^{(n)}_{k,2}=-\frac{27(F_{n}(k)-1)^{2}}{4(G_{n}(k)-k+2)^{2}}~\textrm{and}~ b^{(n)}_{k,2}=\frac{27(F_{n}(k)-1)^{3}}{4(G_{n}(k)-k+2)^{3}}$ for some $F_{n},G_{n} \in \Z[T]$ or $(k,a^{(n)}_{k,3},b^{(n)}_{k,3})$ for some $k \in \R \setminus (B_n \cup C_n)$ where $a^{(n)}_{k,3}=-\frac{27(P_{n}(k)-1)^{2}}{4(Q_{n}(k)-k-2)^{2}}$ and $b^{(n)}_{k,3}=\frac{27(P_{n}(k)-1)^{3}}{4(Q_{n}(k)-k-2)^{3}}$ for some $P_{n},Q_{n} \in \Z[T]$. \\
		Let $(k,a^{(n)}_{k,2},b^{(n)}_{k,2}) \in U_{n}^{n-2}$ so that $B_{n}^{(k,a^{(n)}_{k,2},b^{(n)}_{k,2})} \in Y_{n-2}.$ By the above observation, it follows that we have a system of finitely many equations in $S_{k,1}^{(a^{(n)}_{k,2},b^{(n)}_{k,2})},\cdots,S_{k,n}^{(a^{(n)}_{k,2},b^{(n)}_{k,2})}$. Now, rewrite them in terms of $a_{k,2}^{(n)}$, $b_{k,2}^{(n)}$ according to definition, and by using the fact that $a^{(n)}_{k,2}=-\frac{27(F_{n}(k)-1)^{2}}{4(G_{n}(k)-k+2)^{2}}$ and $b^{(n)}_{k,2}=\frac{27(F_{n}(k)-1)^{3}}{4(G_{n}(k)-k+2)^{3}}$, we can see that there are only finitely many real solutions in $k$ for the given system of equations. \\
		Similarly, let $(k,a^{(n)}_{k,3}, b^{(n)}_{k,3}) \in U_{n}^{n-2}$ so that $B_{n}^{(k,a^{(n)}_{k,3},b^{(n)}_{k,3})} \in Y_{n-2}.$ By the above observation, it follows that we have a system of finitely many equations in $S_{k,1}^{(a^{(n)}_{k,3},b^{(n)}_{k,3})},\cdots,S_{k,n}^{(a^{(n)}_{k,3},b^{(n)}_{k,3})}$. Now, rewrite them in terms of $a_{k,3}^{(n)}$, $b_{k,3}^{(n)}$ according to definition, and by using the fact that $a^{(n)}_{k,3} = -\frac{27(P_{n}(k)-1)^{2}}{4(Q_{n}(k)-k-2)^{2}}$ and $b^{(n)}_{k,3}=\frac{27(P_{n}(k)-1)^{3}}{4(Q_{n}(k)-k-2)^{3}}$, we can see that there are only finitely many real solutions in $k$ for the given system of equations. \\
		Consequently, $U_{n}^{n-2}$ is finite and $B_{n}^{(k,a,b)}$ has rank exactly $n-1$ for all but $\sharp U_{n}^{n-2}$-many triples $(k,a,b)\in U_{n}^{n-1}.$
		\item[(c)] In the proof of (b), we have seen that $U_{n}^{n-2}$ is finite. For each $1 \leq r \leq n-2,$ we have $U_{n}^{r} \subseteq U_{n}^{n-2}$ so that $U_{n}^{r}$ is finite, and hence $B_{n}^{(k,a,b)}$ has rank $r$ for only $(\sharp U_{n}^{r} - \sharp U_{n}^{r-1})$-many triples $(k,a,b) \in U_{n}^{n-1}$.
	\end{enumerate}
This completes the proof.
\end{proof}
\begin{remark}
	In the case of $n=2$, parts (a) and (b) of Theorem \ref{thm_matrix_tn_circulant_even} still hold.
\end{remark}

\section*{Appendix} \label{sec_appen}
In this section, we introduce various formulas that are related to a given $k$-FL sequence. Before giving those formulas, we start with the following
\begin{remark_A}\label{rm_binet}
	It is well-known that the Binet's formulas for $\{F_{k,n}\}_{n \in \Z_{\geq 0}}$ and $\{L_{k,n}\}_{n \in \Z_{\geq 0}}$ are given by
	\begin{equation*}
	F_{k,n}=\frac{\alpha^n -\beta^n}{\alpha-\beta}~~\textrm{and}~~L_{k,n}=\alpha^n + \beta^n~~\textrm{for each}~n \geq 0
	\end{equation*}
	where $\alpha > \beta$ are the roots of the quadratic equation $x^2 -kx-1=0$.
\end{remark_A}
The Binet's formula for a $k$-FL sequence $\{S_{k,n}^{(a,b)}\}_{n \in \Z_{\geq 0}}$ is given by
\begin{lemma_A}[Binet's Formula]\label{lem_binet}
	For each integer $n \geq 0$, we have
	\begin{equation*}
	S^{(a,b)}_{k,n}=\frac{(a+\sqrt{k^2 +4}b)\alpha^n -(a-\sqrt{k^2 +4}b)\beta^n}{\alpha-\beta}
	\end{equation*}
	where $\alpha> \beta$ are the zeros of the polynomial $x^2-kx-1$.
\end{lemma_A}
\begin{proof}
	Note first that we have
	\begin{align*}
	S_{k,n}^{(a,b)} & =a \cdot  F_{k,n}+b \cdot L_{k,n}=a \cdot \frac{\alpha^n -\beta^n}{\alpha-\beta} + b \cdot (\alpha^n+\beta^n)\\
	& =\left(\frac{a}{\alpha-\beta}+b\right)\alpha^n - \left(\frac{a}{\alpha-\beta}-b\right)\beta^n
	\end{align*}
	for each $n \geq 0$, where $\alpha>\beta$ are the zeros of the polynomial $x^2-kx-1$ according to Remark \ref{rm_binet}. Since we have
	\begin{equation*}
	(\alpha-\beta)^2=(\alpha+\beta)^2-4\alpha \beta = k^2 +4
	\end{equation*}
	so that
	\begin{equation*}
	\alpha-\beta = \sqrt{k^2+4},
	\end{equation*}
	the result follows.
\end{proof}
Using Binet's formula, we may introduce some identities that relate a $k$-FL sequence and a $k$-Fibonacci sequence.
\begin{theorem_A}[Catalan's Identity]\label{thm_catalan}
	Let $\{S_{k,n}^{(a,b)}\}_{n \in \Z_{\geq 0}}$ be a $k$-FL sequence. For any two integers $n \geq r \geq 0,$ we have
	\begin{equation*}
	S_{k,n-r}^{(a,b)} S_{k,n+r}^{(a,b)} - (S_{k,n}^{(a,b)})^2 = (-1)^{n-r+1} \{a^2-(k^2+4)b^2\} F_{k,r}^2.
	\end{equation*}
\end{theorem_A}
\begin{proof}
	By Lemma \ref{lem_binet}, we can write
	\begin{equation*}
	S_{k,n}^{(a,b)}=\frac{X \alpha^n - Y \beta^n}{\alpha - \beta}
	\end{equation*}
	where $X=a+\sqrt{k^2+4}b$ and $Y=a-\sqrt{k^2+4}b$. Then we have
	\begin{align*}
	& S_{k,n-r}^{(a,b)} S_{k,n+r}^{(a,b)} - (S_{k,n}^{(a,b)})^2 \\
	= & \frac{1}{(\alpha-\beta)^2}\{(X \alpha^{n-r}-Y \beta^{n-r})(X \alpha^{n+r}-Y \beta^{n+r})-(X \alpha^n - Y \beta^n )^2 \}\\
	= & \frac{1}{(\alpha-\beta)^2}(-XY \alpha^{n-r}\beta^{n+r} -XY \alpha^{n+r} \beta^{n-r} +2XY \alpha^n \beta^n )\\
	= & \frac{-XY(\alpha \beta)^{n-r}}{(\alpha-\beta)^2}(\alpha^{2r}-2\alpha^r \beta^r +\beta^{2r})\\
	= & (-1)^{n-r+1} XY \left(\frac{\alpha^r - \beta^r}{\alpha-\beta}\right)^2.
	\end{align*}
	Now, since we have $XY=a^2 - (k^2+4)b^2$ and $F_{k,r}=\frac{\alpha^r - \beta^r}{\alpha-\beta}$, the result follows.
\end{proof}
In the sequel, we let $X=a+\sqrt{k^2+4}b$ and $Y=a-\sqrt{k^2+4}b$, as in the proof of Theorem \ref{thm_catalan}. The Cassini's identity is a special case of the Catalan's identity with $r=1$.
\begin{corollary_A}[Cassini's Identity]\label{cor_cassini}
	For any integer $n \geq 1$, we have
	\begin{equation*}
	S^{(a,b)}_{k,n-1}S^{(a,b)}_{k,n+1}-(S^{(a,b)}_{k,n})^2 = (-1)^n \{a^2 - (k^2+4)b^2 \}.
	\end{equation*}
\end{corollary_A}
\begin{proof}
	Recall that $F_{k,1}=1$.
\end{proof}
Corollary \ref{cor_cassini} has the following consequence:
\begin{corollary_A}
	A $k$-FL sequence $\{S_{k,n}^{(a,b)}\}_{n \in \Z_{\geq 0}}$ is a geometric sequence if and only if $a^2 = (k^2 +4)b^2$. In particular, given a positive real number $k$, there are infinitely many pairs $(a,b)\in \R^2$ such that $\{S_{k,n}^{(a,b)}\}_{n \in \Z_{\geq 0}}$ is a geometric sequence.
\end{corollary_A}
Now, we are ready to move to
\begin{theorem_A}[d$^\prime$Ocagne's Identity]\label{thm_docagne}
	Let $\{S_{k,n}^{(a,b)}\}_{n \in \Z_{\geq 0}}$ be a $k$-FL sequence. For any two integers $m \geq n \geq 0$, we have
	\begin{equation*}
	S_{k,m}^{(a,b)} S_{k,n+1}^{(a,b)} - S_{k,m+1}^{(a,b)} S_{k,n}^{(a,b)} = (-1)^{n} \{a^2-(k^2+4)b^2\} F_{k,m-n}.
	\end{equation*}
\end{theorem_A}
\begin{proof}
	By Lemma \ref{lem_binet}, we have
	\begin{align*}
	& S_{k,m}^{(a,b)} S_{k,n+1}^{(a,b)} - S_{k,m+1}^{(a,b)} S_{k,n}^{(a,b)}\\
	= &\frac{1}{(\alpha-\beta)^2}\{(X \alpha^{m}-Y \beta^{m})(X \alpha^{n+1}-Y \beta^{n+1}) \\
	& \quad -(X \alpha^{m+1}-Y \beta^{m+1})(X \alpha^{n}-Y \beta^{n})\}\\
	= & \frac{-XY}{(\alpha-\beta)^2}(\alpha^m \beta^{n+1}+\alpha^{n+1}\beta^m - \alpha^{m+1}\beta^n - \alpha^n \beta^{m+1})\\
	= & \frac{-XY}{(\alpha-\beta)^2}\{(\alpha\beta)^n \beta^{m-n}(\alpha-\beta)-(\alpha\beta)^n \alpha^{m-n}(\alpha-\beta)\}\\
	= & \frac{-(\alpha\beta)^n XY (\alpha-\beta)}{(\alpha-\beta)^2} (\beta^{m-n}-\alpha^{m-n})\\
	= & (-1)^n XY \frac{\alpha^{m-n}-\beta^{m-n}}{\alpha-\beta}\\
	= & (-1)^n \{a^2-(k^2+4)b^2\}F_{k,m-n}.
	\end{align*}
	This completes the proof.
\end{proof}
Next, we are interested in finding various types of sums of terms of a $k$-FL sequence. Again, let $\{S_{k,n}^{(a,b)}\}_{n \in \Z_{\geq 0}}$ be a $k$-FL sequence.
\begin{theorem_A}\label{thm_identity_sum_all}
	For any integer $n \geq 1$, we have
	\begin{equation*}
	\sum_{i=1}^n S_{k,i}^{(a,b)}=\frac{1}{k}\left(S_{k,n+1}^{(a,b)}+S_{k,n}^{(a,b)}\right)-b -\frac{a+2b}{k}.
	\end{equation*}
\end{theorem_A}
\begin{proof}
	By Lemma \ref{lem_binet}, we have
	\begin{align*}
	& \sum_{i=1}^n S_{k,i}^{(a,b)} \\
	= & \frac{1}{\alpha-\beta} \sum_{i=1}^n (X \alpha^i - Y \beta^i ) = \frac{1}{\alpha-\beta}\left(\frac{X \alpha(\alpha^n -1)}{\alpha-1}- \frac{Y \beta(\beta^n -1)}{\beta-1} \right)\\
	= & \frac{1}{(\alpha-\beta)(\alpha-1)(\beta-1)}\left(X \alpha(\alpha^n -1)(\beta-1) - Y \beta(\beta^n -1)(\alpha-1)\right)\\
	= & \frac{-1}{(\alpha-\beta)(\alpha+\beta)}\left(-X (\alpha^n -1)(\alpha+1)+Y(\beta^n -1)(\beta+1)\right)\\
	= & \frac{1}{k} \left(\frac{X \alpha^{n+1}-Y \beta^{n+1}}{\alpha-\beta}+\frac{X \alpha^{n}-Y \beta^{n}}{\alpha-\beta}- \frac{X \alpha-Y \beta}{\alpha-\beta}- \frac{X -Y }{\alpha-\beta} \right)\\
	= & \frac{1}{k} \left(S_{k,n+1}^{(a,b)}+S_{k,n}^{(a,b)} - S_{k,1}^{(a,b)}-S_{k,0}^{(a,b)}\right)= \frac{1}{k}\left(S_{k,n+1}^{(a,b)}+S_{k,n}^{(a,b)}\right)-b-\frac{a+2b}{k}.
	\end{align*}
	This completes the proof.
\end{proof}
Another possible result is the following
\begin{theorem_A}\label{thm_identity_sum_even}
	For any integer $n \geq 1$, we have
	\begin{equation*}
	\sum_{i=1}^n S_{k,2i}^{(a,b)} = \frac{1}{k} \cdot S_{k,2n+1}^{(a,b)}-b-\frac{a}{k}.
	\end{equation*}
\end{theorem_A}
\begin{proof}
	By Lemma \ref{lem_binet}, we have
	\begin{align*}
	& \sum_{i=1}^n S_{k,2i}^{(a,b)} \\
	= & \frac{1}{\alpha-\beta} \sum_{i=1}^n (X \alpha^{2i} - Y \beta^{2i} ) = \frac{1}{\alpha-\beta}\left(\frac{X \alpha^2 (\alpha^{2n} -1)}{\alpha^2 -1}- \frac{Y \beta^2 (\beta^{2n} -1)}{\beta^2 -1} \right)\\
	= & \frac{1}{(\alpha-\beta)(\alpha^2 -1)(\beta^2 -1)}\left(X \alpha^2 (\alpha^{2n} -1)(\beta^2 -1) - Y \beta^2 (\beta^{2n} -1)(\alpha^2 -1)\right)\\
	= & \frac{-1}{(\alpha-\beta)k^2 }\left(X (\alpha^{2n} -1)(1-\alpha^2)- Y(\beta^{2n} -1)(1-\beta^2 )\right)\\
	= & \frac{1}{(\alpha-\beta)k }\left(X (\alpha^{2n+1} -\alpha)- Y(\beta^{2n+1} -\beta) \right)\\
	= & \frac{1}{k} \left(\frac{X \alpha^{2n+1}-Y \beta^{2n+1}}{\alpha-\beta} - \frac{X \alpha-Y \beta}{\alpha-\beta} \right)\\
	= & \frac{1}{k} \left(S_{k,2n+1}^{(a,b)} - S_{k,1}^{(a,b)}\right)= \frac{1}{k} \cdot S_{k,2n+1}^{(a,b)}-b-\frac{a}{k}.
	\end{align*}
	This completes the proof.
\end{proof}
Two immediate consequences of Theorem \ref{thm_identity_sum_all} and Theorem \ref{thm_identity_sum_even} are the following corollaries:
\begin{corollary_A}\label{cor_identity_sum_odd}
	For any integer $n \geq 1$, we have
	\begin{equation*}
	\sum_{i=1}^n S_{k,2i-1}^{(a,b)}=\frac{1}{k} \cdot S_{k,2n}^{(a,b)}-\frac{2b}{k}.
	\end{equation*}
\end{corollary_A}
\begin{proof}
	By Theorem \ref{thm_identity_sum_all} and Theorem \ref{thm_identity_sum_even}, we have
	\begin{align*}
	\sum_{i=1}^n S_{k,2i-1}^{(a,b)} & = \sum_{i=1}^{2n}S_{k,i}^{(a,b)}-\sum_{i=1}^n S_{k,2i}^{(a,b)} \\
	& =\frac{1}{k}(S_{k,2n+1}^{(a,b)}+S_{k,2n}^{(a,b)})-b-\frac{a+2b}{k}-\frac{1}{k} \cdot S_{k,2n+1}^{(a,b)}+b+\frac{a}{k}\\
	& =\frac{1}{k} \cdot S_{k,2n}^{(a,b)}-\frac{2b}{k}.
	\end{align*}
	This completes the proof.
\end{proof}
\begin{corollary_A}\label{cor_-1_sum}
	For any integer $n \geq 1$, let $r=\left[\frac{n}{2}\right]$. Then we have
	\begin{equation*}
	\sum_{i=1}^n (-1)^i \cdot S_{k,i}^{(a,b)}=\frac{1}{k}\left(2 S_{k,2r+1}^{(a,b)}-S_{k,n+1}^{(a,b)}-S_{k,n}^{(a,b)}\right) -b -\frac{a-2b}{k}.
	\end{equation*}
\end{corollary_A}
\begin{proof}
	Note that we have
	\begin{equation*}
	\sum_{i=1}^n (-1)^i \cdot S_{k,i}^{(a,b)}=2 \sum_{i=1}^r S_{k,2i}^{(a,b)}-\sum_{i=1}^n S_{k,i}^{(a,b)}.
	\end{equation*}
	Now, by using Theorem \ref{thm_identity_sum_all} and Theorem \ref{thm_identity_sum_even}, the result follows.
\end{proof}
There are also some results about the sums involving binomial coefficients in the literature. Regarding this kind of sums, we have the following
\begin{theorem_A}\label{thm_identity_binomial}
	For any integer $n \geq 0$, we have
	\begin{equation*}
	\sum_{i=0}^n \binom{n}{i}  k^i \cdot S_{k,i}^{(a,b)}=S_{k,2n}^{(a,b)}.
	\end{equation*}
\end{theorem_A}
\begin{proof}
	By Lemma \ref{lem_binet}, we have
	\begin{align*}
	\sum_{i=0}^n \binom{n}{i}  k^i \cdot S_{k,i}^{(a,b)} & = \frac{1}{\alpha-\beta}\sum_{i=0}^n \binom{n}{i}  k^i (X \alpha^i -Y \beta^i ) \\
	& =\frac{1}{\alpha-\beta} \left(X \sum_{i=0}^n \binom{n}{i}  (k \alpha)^i - Y \sum_{i=0}^n \binom{n}{i}  (k \beta)^i \right) \\
	& =\frac{1}{\alpha-\beta} \{X (k \alpha+1)^n - Y (k \beta+1)^n \} \\
	& =\frac{1}{\alpha-\beta}(X \alpha^{2n}-Y \beta^{2n})~~~(\textrm{since }k\alpha+1=\alpha^2, k\beta+1=\beta^2) \\
	& =S_{k,2n}^{(a,b)}.
	\end{align*}
	This completes the proof.
\end{proof}
Finally, we examine another interesting identity:
\begin{theorem_A}[Livio's Formula]\label{thm_identity_Livio}
	For any real number $p$ with $p \ne 0,\alpha, \beta$, and for any integer $n \geq 1$, we have
	\begin{equation*}
	\sum_{i=1}^n \frac{S_{k,i}^{(a,b)}}{p^i}=\frac{1}{(p^2 -pk -1)p^n }\left(-S_{k,n}^{(a,b)}-p S_{k,n+1}^{(a,b)}+p^{n+1}(a+bk)+2b p^n \right).
	\end{equation*}
\end{theorem_A}
\begin{proof}
	By Lemma \ref{lem_binet}, we have
	\begin{align*}
	& \sum_{i=1}^n \frac{S_{k,i}^{(a,b)}}{p^i} = \frac{1}{\alpha-\beta} \left(X \cdot \sum_{i=1}^n \left(\frac{\alpha}{p}\right)^i - Y \cdot \sum_{i=1}^n \left(\frac{\beta}{p}\right)^i \right)\\
	= & \frac{1}{\alpha-\beta}\left(X \cdot \frac{\alpha \left(\frac{\alpha}{p}\right)^n -\alpha}{\alpha-p}- Y \cdot \frac{\beta \left(\frac{\beta}{p}\right)^n -\beta}{\beta-p} \right)\\
	= & \frac{1}{(\alpha-\beta)(\alpha-p)(\beta-p)}\left\{X \left( \alpha \left(\frac{\alpha}{p}\right)^n -\alpha \right)(\beta-p) \right.\\
	& \quad \left. - Y \left( \beta \left(\frac{\beta}{p}\right)^n -\beta \right)(\alpha-p)\right\}\\
	= & \frac{1}{(\alpha-\beta)(p^2-pk-1)p^n}\left\{X (\alpha^n -p^n)(-1-p \alpha)-Y(\beta^n -p^n)(-1-p\beta)\right\}\\
	= & \frac{1}{(p^2-pk-1)p^n} \left(-\frac{X \alpha^{n}-Y \beta^{n}}{\alpha-\beta}-p \cdot \frac{X \alpha^{n+1}-Y \beta^{n+1}}{\alpha-\beta} \right.\\
	& \quad \left. + p^{n+1} \cdot \frac{X \alpha-Y \beta}{\alpha-\beta} + p^n \cdot \frac{X -Y }{\alpha-\beta} \right)\\
	= & \frac{1}{(p^2-pk-1)p^n} \left(-S_{k,n}^{(a,b)}-p S_{k,n+1}^{(a,b)} +p^{n+1} S_{k,1}^{(a,b)} +p^n S_{k,0}^{(a,b)}\right)\\
	= &\frac{1}{(p^2-pk-1)p^n}\left(-S_{k,n}^{(a,b)}-p S_{k,n+1}^{(a,b)}+p^{n+1}(a+bk)+2b p^n \right).
	\end{align*}
	This completes the proof.
\end{proof}
\begin{remark_A}
	Theorem \ref{thm_identity_Livio} gives another proof of Corollary \ref{cor_-1_sum} by taking $p=-1$.
\end{remark_A}

\medskip

\noindent MSC2010: 11B39, 15B99, 14M12, 14H50


\begin{thebibliography}{99}
\bibitem{Bolat2010}
{C.~Bolat, and H.~K$\ddot{o}$se}, \emph{On the properties of k-Fibonacci numbers}, {International Journal of Contemporary Mathematical Sciences}, \textbf{22.5} (2010), 1097--1105.

\bibitem{Carmichael1913}
{R.~D.~Carmichael}, \emph{On the numerical factors of the arithmetic forms $\alpha^n \pm \beta^n$}, {The Annals of Mathematics}, \textbf{15.1/4} (1913), 49--70.

\bibitem{S}
{S.~Falc$\acute{\rm{o}}$n}, \emph{On the $k$-Lucas numbers}, {International Journal of Contemporary Mathematical Sciences}, \textbf{6.21} (2011), 1039--1050.
\bibitem{SA}
{S.~Falc$\acute{\rm{o}}$n, and $\acute{\rm{A}}$.~Plaza}, \emph{On the Fibonacci $k$-numbers}, {Chaos, Solitons \& Fractals}, \textbf{32.5} (2007), 1615--1624.
\bibitem{GJG}
{Y.~Gao, Z.~L.~Jiang, and Y.~P.~Gong}, \emph{On the determinants and inverses of skew circulant and skew left circulant matrices with Fibonacci and Lucas numbers}, {WSEAS Transactions on Mathematics}, \textbf{12.4} (2013), 472--481.
\bibitem{SCH}
{S.~Q.~Shen, J.~M.~Cen, and Y.~Hao}, \emph{On the determinants and inverses of circulant matrices with Fibonacci and Lucas numbers}, {Applied Mathematics and Computation}, \textbf{217.23} (2011), 9790--9797.

\bibitem{Silverman2013}
{J.~H.~Silverman}, \emph{Primitive divisors, dynamical Zsigmondy sets, and Vojta's conjecture}, {Journal of Number Theory}, \textbf{133.9} (2013), 2948--2963.

\bibitem{Zsigmondy1892}
{K.~Zsigmondy}, \emph{Zur theorie der potenzreste}, {Monatshefte f$\ddot{u}$r Mathematik}, \textbf{3.1} (1892), 265--284.
\end{thebibliography}
\end{document}